\documentclass[a4paper,reqno,12pt]{amsart} 
   %




\usepackage{amsmath} 



\usepackage{amssymb}      


\usepackage{yhmath}
\usepackage{mathdots}
\usepackage{MnSymbol}





\usepackage{amsthm}      

\usepackage{tikz,amsmath}
\usetikzlibrary{arrows}

\usepackage{epsfig}      



\usepackage{graphicx}
\usepackage[normalem]{ulem}
\usepackage[all,line,
]{xy} 
\CompileMatrices 



      



   \newcommand{\Aff}{{\operatorname{Aff}}}
 

   \newcommand{\Hom}{\operatorname{Hom}}

\newcommand{\id}{\operatorname{id}} 
\newcommand{\Aut}{\operatorname{Aut}}

 \newcommand{\supp}{\operatorname{supp}}

\newcommand{\ev}{\operatorname{ev}}

\newcommand{\Sup}{\operatorname{sup}}





   \theoremstyle{plain}
   \newtheorem{thm}{Theorem}[section]
   
   \newtheorem{lemma}[thm]{Lemma}  
   \newtheorem{cor}[thm]{Corollary}
   \theoremstyle{definition}
   
   \newtheorem{defn}[thm]{Definition}
   
   \theoremstyle{remark}
   
   \newtheorem{remark}[thm]{Remark}

\newtheorem{poem}[thm]{Additional properties}
\newtheorem{TOM}[thm]{Property}



\usepackage{mdframed,xcolor}

\definecolor{mybgcolor}{gray}{0.8}
\definecolor{myframecolor}{rgb}{.647,.129,.149}

\mdfdefinestyle{mystyle}{
  usetwoside=false,
  skipabove=0.6em plus 0.8em minus 0.2em,
  skipbelow=0.6em plus 0.8em minus 0.2em,
  innerleftmargin=.25em,
  innerrightmargin=0.25em,
  innertopmargin=0.25em,
  innerbottommargin=0.25em,
  leftmargin=-.75em,
  rightmargin=-0em,
  topline=false,
  rightline=false,
  bottomline=false,
  leftline=false,
  backgroundcolor=mybgcolor,
  splittopskip=0.75em,
  splitbottomskip=0.25em,
  innerleftmargin=0.5em,
  leftline=true,
  linecolor=myframecolor,
  linewidth=0.25em,
}

\newmdenv[style=mystyle]{important}

\usepackage{lipsum}




   \numberwithin{equation}{section}








        \date{\today}

\title[Bundles of KMS state spaces]{The bundle of KMS state spaces for flows on a unital $C^*$-algebra}
\author{George A. Elliott and Klaus Thomsen}



\DeclareMathOperator\coker{coker}

\date{\today}

\address{Department of Mathematics, University of Toronto, Toronto, Ontario,
Canada \ M5S 2E4}
\email{ elliott@math.toronto.edu}

\bigskip


\bigskip

\address{Department of Mathematics, Aarhus University, Ny Munkegade, 8000 Aarhus C, Denmark}

\email{matkt@math.au.dk}

\begin{document}

\maketitle

\section{Introduction}

The collection of KMS state spaces for a flow on a unital $C^*$-algebra can be thought of as a bundle of simplices over the real line. This may seem like a far-fetched analogy to other more established notions of bundles because the fibers of the bundle may be empty or they may all be mutually non-isomorphic, but we will show here that it is in fact a well-behaved and useful concept. Specifically, we use it to show that for any given unital separable infinite-dimensional simple AF algebra $A$ and for any configuration of KMS state spaces which occurs for a flow on a unital separable $C^*$-algebra and has the property that the simplex of $0$-KMS states is affinely homeomorphic to the tracial state space of $A$, there is a flow on $A$ with the same configuration. In particular, it follows that for any given closed subset $F$ of real numbers containing $0$ there are flows on $A$ whose KMS spectrum is $F$. This removes the lower boundedness condition which occurs in a recent work by the second author, \cite{Th3}. 

Since we deal with unital AF algebras, there are always $0$-KMS states present. For flows on infinite $C^*$-algebras this is not the case, and in a joint work with Y. Sato we have shown that for any unital, nuclear, purely infinite, simple, separable $C^*$-algebra $A$ in the UCT class and with torsion-free $K_1$ group, and for any configuration of KMS state spaces which occurs for a flow on a unital separable $C^*$-algebra without trace states, there is also a flow on $A$ with the same configuration; see \cite{EST}. In both cases we depend on results from the classification of simple $C^*$-algebras.

While the work in \cite{Th3} was based on ideas from \cite{BEH} and \cite{BEK1}, in the present paper the underlying ideas are closer to those presented by Bratteli, Elliott and Kishimoto in \cite{BEK2}. In particular, the idea of considering the configuration of KMS simplices as a bundle originates from \cite{BEK2}.

\emph{Acknowledgements} The work of the first named author was supported by a Natural Sciences and Engineering Research Council of Canada Discovery Grant. The work of the second named author was supported by the DFF-Research Project 2 `Automorphisms and Invariants of Operator Algebras', no. 7014-00145B.

\section{Proper simplex bundles}

  Let $S$ be a second countable locally compact Hausdorff space and $\pi : S \to \mathbb R$ a continuous map. If the inverse image $\pi^{-1}(t)$, equipped with the relative topology inherited from $S$, is homeomorphic to a compact metrizable Choquet simplex for all $t \in \mathbb R$ we say that $(S,\pi)$ is a \emph{simplex bundle}. We emphasize that $\pi$ need not be surjective, and we consider therefore also the empty set as a simplex. When $(S,\pi)$ is a simplex bundle we denote by $\mathcal A(S,\pi)$ the set of continuous functions $f : S \to \mathbb R$ with the property that the restriction $f|_{\pi^{-1}(t)}$ of $f$ to $\pi^{-1}(t)$ is affine for all $t \in \mathbb R$.

\begin{defn}\label{25-08-21} (Compare \cite{BEK2}.) A simplex bundle $(S,\pi)$ is a \emph{proper simplex bundle} when
\begin{itemize}
\item[(1)] $\pi$ is proper; that is $\pi^{-1}(K)$ is compact in $S$ when $K \subseteq \mathbb R$ is compact, and
\item[(2)] $\mathcal A(S,\pi)$ separate points on $S$; that is for all $x\neq y$ in $S$ there is an $f \in\mathcal A(S,\pi)$ such that $f(x) \neq f(y)$.
\end{itemize}
\end{defn}

Two proper simplex bundles $(S,\pi)$ and $(S',\pi')$ are \emph{isomorphic} when there is a homeomorphism $\phi : S \to S'$ such that $\pi' \circ \phi = \pi$ and $\phi: \pi^{-1}(\beta) \to {\pi'}^{-1}(\beta)$ is affine for all $\beta \in \mathbb R$.

\subsection{Proper simplex bundles from flows}\label{flows}
In this paper all $C^*$-algebras are assumed to be separable and all traces and weights on a $C^*$-algebra are required to be non-zero, densely defined and lower semi-continuous. Let $A$ be a $C^*$-algebra and $\theta$ a flow on $A$. Let $\beta \in \mathbb R$. A $\beta$-KMS weight for $\theta$ is a weight $\omega$ on $A$ such that $\omega \circ \theta_t = \omega$ for all $t$, and 
\begin{equation}\label{27-10-20c}
\omega(a^*a) \ = \ \omega\left(\theta_{-\frac{i\beta}{2}}(a) \theta_{-\frac{i\beta}{2}}(a)^*\right) \ \  \ \forall a \in D(\theta_{-\frac{i\beta}{2}}) \ .
\end{equation}
In particular, a $0$-KMS weight for $\theta$ is a $\theta$-invariant trace. 
 A bounded $\beta$-KMS weight is called a $\beta$-KMS functional and a $\beta$-KMS state when it is of norm one. For states alternative formulations of the KMS condition can be found in \cite{BR}.

 Assume that $A$ is unital. For each $\beta \in \mathbb R$ let $S^\theta_\beta$ be the (possibly empty) set of $\beta$-KMS states for $\theta$. Let $E(A)$ be the state space of $A$, a compact convex set in the weak* topology.
Set
$$
S^\theta = \left\{(\omega, \beta) \in E(A) \times \mathbb R: \ \omega \in S^\theta_\beta \right\} \ ,
$$
and equip $S^\theta$ with the relative topology inherited from the product topology of $E(A) \times \mathbb R$.  Since $S^\theta$ is a closed subset of $E(A)\times \mathbb R$ by Proposition 5.3.23 of \cite{BR}, it follows that $S^\theta$ is a second countable locally compact Hausdorff space. Denote by $\pi^\theta : S^\theta \to \mathbb R$ the projection to the second coordinate. Since the inverse image ${\pi^{\theta}}^{-1}(\beta)$ is homeomorphic to $S^\theta_\beta$, which is a Choquet simplex by Theorem 5.3.30 of \cite{BR}, the pair $(S^\theta,\pi^\theta)$ is a simplex bundle. An obvious application of Proposition 5.3.23 of \cite{BR}, using the compactness of $E(A)$, shows that $\pi^\theta$ is proper. Note that every self-adjoint element $a \in A$ gives rise to an element $\hat{a} \in \mathcal A(S^\theta,\pi^\theta)$ such that $\hat{a}(\omega,\beta) = \omega(a)$. A continuous function $f :  \mathbb R \to \mathbb R$ also gives rise to an element of $\mathcal A(S^\theta,\pi^\theta)$: 
$$
S^\theta \ni (\omega,\beta) \ \mapsto \ f(\beta) .
$$
It follows that $\mathcal A(S^\theta,\pi^\theta)$ separates the points of $S^\theta$, showing that $(S^\theta,\pi^\theta)$ is a proper simplex bundle, which we shall call the \emph{KMS bundle} of the flow. In general, ${\pi^{\theta}}^{-1}(0)$ is the set of $\theta$-invariant trace states and hence non-empty if and only if $A$ has trace states. When $A$ is AF it is the simplex of all trace states of $A$.

\begin{remark}\label{07-09-21f} Let $(S,\pi)$ be a proper simplex bundle. Denote by $\mathcal A_\mathbb R(S,\pi)$ the subset of $\mathcal A(S,\pi)$ consisting of the elements that have a limit at infinity. This is a separable real Banach space (in the supremum norm) containing the constant function $1$, and its state space
\begin{align*}
& E(\mathcal A_\mathbb R(S,\pi)):= \\
&  \left\{ \omega \in \mathcal A_\mathbb R(S,\pi)^* : \ |\omega(f)| \leq \sup_{x \in S}|f(x)| \ \ \forall f \in \mathcal A_\mathbb R(S,\pi) , \ \omega(1) = 1\right\}
\end{align*}
is a metrizable compact convex set in the weak* topology. For $x \in S$, let $\ev_x \in E(\mathcal A_\mathbb R(S,\pi))$ denote evaluation at $x$. For each $\beta \in \mathbb R$, the set
$$
K_\beta := \left\{ \ev_x : \ x \in \pi^{-1}(\beta) \right\}
$$
is a closed convex subset of $E(\mathcal A_\mathbb R(S,\pi))$. With $K = E(\mathcal A_\mathbb R(S,\pi))$, the system $K_\beta, \beta \in \mathbb R$, has the properties required in Theorem 2.1 of \cite{BEK1}. Thus, there is a unital, simple, separable, nuclear $C^*$-algebra $A$ equipped with a $2 \pi$-periodic flow $\theta$ such that $\pi^{-1}(\beta)$ is affinely homeomorphic to the simplex of $\beta$-KMS states for all $\beta \in \mathbb R$. In the construction of $A$ in \cite{BEK1} the algebra appears to depend on $(S,\pi)$ and on the many choices made in the process of its construction, but it was shown in \cite{EST}, based on the Kirchberg-Phillips classification result, that when $\pi^{-1}(0)$ is empty one can take $A$ to be any given separable, simple, nuclear, purely infinite $C^*$-algebra in the UCT class and with torsion free $K_1$ group. It follows from the main result we describe next that when $\pi^{-1}(0)$ is not empty one can take $A$ to be any infinite dimensional unital simple AF algebra whose tracial state space is affinely homeomorphic to $\pi^{-1}(0)$.
\end{remark}

\section{The main result and applications}

\begin{thm}\label{26-08-21} Let $(S,\pi)$ be a proper simplex bundle and let $A$ be a unital infinite-dimensional simple $AF$ algebra with a tracial state space affinely homeomorphic to $\pi^{-1}(0)$. There is a $2\pi$-periodic flow on $A$ whose KMS bundle is isomorphic to $(S,\pi)$.
\end{thm}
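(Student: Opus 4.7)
The plan is to build the flow $\theta$ as an approximately inner flow on $A$, following the inductive-limit strategy of \cite{BEK2}. Write $A = \overline{\bigcup_n A_n}$ as the closure of an increasing union of finite-dimensional $C^*$-subalgebras $A_n$. For each $n$, choose a self-adjoint $H_n \in A_n$ with spectrum contained in $\mathbb Z$, and consider the $2\pi$-periodic inner flow $\theta^{(n)}_t = \Ad(e^{itH_n})$. The flow on $A$ will be obtained as a limit $\theta_t = \lim_n \theta^{(n)}_t$; the challenge is to choose the $H_n$ so that the limit exists and has KMS bundle isomorphic to $(S,\pi)$.

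Next, I would approximate $(S,\pi)$ by KMS bundles of such finite-dimensional model flows. For $H_n \in A_n$ as above, the $\beta$-KMS states of $\theta^{(n)}$ restricted to $A_n$ are Gibbs-type states built from tracial states on the simple summands of $A_n$ weighted by $e^{-\beta H_n}$, so the KMS bundle of $\theta^{(n)}$ is a concrete, piecewise-analytic object depending on the block sizes of $A_n$ and the spectrum of $H_n$. Using the separability of $\mathcal A_\mathbb R(S,\pi)$ from Remark~\ref{07-09-21f} and the properness of $\pi$, I would produce a sequence of such model bundles converging to $(S,\pi)$ uniformly on compact $\beta$-intervals, tested against a fixed countable dense set of affine functions in $\mathcal A_\mathbb R(S,\pi)$.

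The inductive construction then proceeds as follows. At stage $n$ we have $A_n$, $H_n$, $\theta^{(n)}$, and a tolerance $\varepsilon_n \to 0$, with the inductive hypothesis that the KMS bundle of $\theta^{(n)}$ agrees with $(S,\pi)$ on $[-n,n]$ to within $\varepsilon_n$ in the sense above. To pass from stage $n$ to stage $n+1$, we enlarge to $A_{n+1} \supseteq A_n$ inside $A$ and choose $H_{n+1} \in A_{n+1}$ with integer spectrum that (i) perturbs $H_n$ in norm by a controlled small amount so that $e^{itH_n}$ converges strongly to a continuous one-parameter unitary group, and (ii) refines the KMS data to match $(S,\pi)$ on $[-(n+1),n+1]$ within $\varepsilon_{n+1}$. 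An Elliott-style approximate-intertwining move --- using the hypothesis that $A$ is a unital simple infinite-dimensional AF algebra whose tracial simplex is identified with $\pi^{-1}(0)$ --- gives the freedom to conjugate the $A_{n+1}$ inside $A$ by unitaries close to the identity, so that the internal block structure of $A_{n+1}$ can be adjusted to meet both constraints while keeping the ambient algebra equal to $A$.

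The main obstacle is reconciling two competing demands: (a) the norms $\|H_{n+1} - H_n\|$ must decrease rapidly enough that $\Ad(e^{itH_n})$ converges to a genuine strongly continuous flow, while (b) the KMS bundle must be refined at each stage so that, in the limit, all of $(S,\pi)$ is realized --- including, crucially, producing \emph{empty} fibers at the correct inverse temperatures, a qualitative condition not directly governed by the quantitative tolerances. Handling empty fibers requires preventing spurious limit-KMS states from appearing at temperatures absent from $(S,\pi)$; the properness of $\pi$ is essential here, as it restricts the support of the bundle and permits progressive cut-offs outside compact intervals $[-n,n]$. Once this balance is struck by a careful choice of the $\varepsilon_n$ and the approximation scheme, the closedness of the KMS condition under weak$^*$ limits (Proposition 5.3.23 of \cite{BR}) identifies the KMS bundle of $\theta$ with $(S,\pi)$.
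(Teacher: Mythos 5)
Your proposal takes a genuinely different route from the paper, but as written it contains gaps that are fatal rather than merely technical. The most concrete problem is the tension you yourself flag between (a) and (b), which is not a difficulty to be ``balanced'' but an outright contradiction: if the self-adjoint generators $H_n$ converge in norm (which is what ``$\|H_{n+1}-H_n\|$ must decrease rapidly enough'' gives you), the limit flow is the inner flow $\Ad(e^{itH})$ for a bounded $H=\lim_n H_n$, and for such a flow every fiber of the KMS bundle is affinely homeomorphic to $T(A)$ via $\tau\mapsto\tau(e^{-\beta H}\,\cdot\,)/\tau(e^{-\beta H})$. The KMS bundle of the limit would therefore always be the trivial bundle $T(A)\times\mathbb R$; empty fibers, varying simplices, and non-full KMS spectrum are all impossible. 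Any construction of this type must let the generator become unbounded, and convergence of the flows then has to be arranged by a different mechanism (e.g.\ making $H_{n+1}-H_n$ commute with $A_n$ so that $\theta^{(n+1)}$ extends $\theta^{(n)}$ on $A_n$), which your scheme does not provide. Second, the step where you invoke ``an Elliott-style approximate-intertwining move'' to force the ambient algebra to be the prescribed $A$ while simultaneously steering the KMS data is precisely the heart of the matter, and it is asserted rather than proved; in \cite{BEK2} the algebra is \emph{constructed along with} the flow and cannot be prescribed in advance, and removing that limitation is exactly what this paper (and \cite{Th3}, \cite{EST}) is about. Finally, the closedness of the KMS condition under weak* limits only shows that limits of KMS states are KMS states; it neither rules out spurious KMS states for the limit flow nor shows that every point of $(S,\pi)$ is attained, so the last sentence of your argument does not close the proof.

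For contrast, the paper avoids inner approximation entirely. From $(S,\pi)$ and $H=K_0(A)$ it builds a countable ordered group $(\Gamma,\Gamma^+)$ of affine functions on $S$ together with an order automorphism $\alpha$ (Lemmas \ref{01-09-21}--\ref{01-09-21d}), realizes $(\Gamma,\Gamma^+,\alpha)$ as $(K_0(B),K_0(B)^+,\gamma_*)$ for a stable AF algebra $B$ and an automorphism $\gamma$ with good regularity properties (Lemma \ref{08-08-21a}), forms $C=B\rtimes_\gamma\mathbb Z$ with the dual action as the $2\pi$-periodic flow, and computes the Elliott invariant of a corner $eCe$ via the Pimsner--Voiculescu sequence (Lemmas \ref{01-09-21k}--\ref{02-09-21c}). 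The classification theorem then identifies $eCe$ with $A$ (Lemma \ref{02-09-21k}), and the KMS bundle is computed exactly, not approximately, by translating $\beta$-KMS states into positive homomorphisms $\phi:\Gamma\to\mathbb R$ with $\phi\circ\alpha=e^{-\beta}\phi$ (Corollary \ref{06-08-21a} and Lemma \ref{27-08-21x}). If you want to pursue your approach, you would need to resolve the unboundedness issue above and then essentially redo the intertwining machinery of \cite{BEK2} in a form compatible with a prescribed target algebra; the crossed-product-plus-classification route exists precisely because that has not been done.
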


By definition the {KMS spectrum} of a flow $\theta$ on a unital $C^*$-algebra is the set of real numbers $\beta$ for which $\theta$ has a $\beta$-KMS state. When the KMS bundle of $\theta$ is isomorphic to a proper simplex bundle $(S,\pi)$, the KMS spectrum of $\theta$ is the range $\pi(S)$ of $\pi$.

To exhibit possible ways to work with proper simplex bundles and to illustrate how Theorem \ref{26-08-21} can be applied let us use it to prove the following statement.

\begin{cor}\label{08-09-21} Let $A$ be a unital infinite-dimensional simple AF algebra and let $F$ be a closed subset of real numbers containing $0$.
\begin{itemize}
\item There is a $2 \pi$-periodic flow on $A$ whose KMS spectrum is $F$ and such that there is a unique $\beta$-KMS state for all $\beta \in F \backslash\{0\}$.
\item There is a $2\pi$-periodic flow $\theta$ on $A$ whose KMS spectrum is $F$ and such that $S^\theta_\beta$ is not affinely homeomorphic to $S^\theta_{\beta'}$ when $\beta,\beta' \in F \backslash \{0\}$ and $\beta \neq \beta'$.
\end{itemize}
\end{cor}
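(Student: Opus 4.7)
By Theorem \ref{26-08-21}, both assertions reduce to constructing a proper simplex bundle $(S,\pi)$ with $\pi^{-1}(0)$ affinely homeomorphic to $T(A)$, $\pi(S)=F$, and with fibers over $F\setminus\{0\}$ of the prescribed isomorphism class.

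For the first assertion, fix any $\tau_0\in T(A)$ and take
\[
S_1=(T(A)\times\{0\})\cup\{(\tau_0,\beta):\beta\in F\setminus\{0\}\}\subset T(A)\times\mathbb R,
\]
with $\pi_1$ the projection onto the second coordinate. Since $F$ is closed, $S_1$ is closed in $T(A)\times\mathbb R$, and $\pi_1^{-1}(K)\subset T(A)\times(F\cap K)$ is compact whenever $K\subset\mathbb R$ is compact, so $\pi_1$ is proper. The fiber over $0$ is a copy of $T(A)$, and each other fiber is a singleton (a trivial Choquet simplex). Every continuous affine $h:T(A)\to\mathbb R$ extends to an element of $\mathcal A(S_1,\pi_1)$ by the constant value $h(\tau_0)$ on the singleton fibers (continuity at level $0$ is immediate), and, together with pull-backs of continuous functions $\mathbb R\to\mathbb R$ along $\pi_1$, these separate the points of $S_1$. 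Theorem \ref{26-08-21} then yields the first flow.

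For the second assertion, the required flow will come from a proper simplex bundle $(S_2,\pi_2)$ whose fibers over $F\setminus\{0\}$ are pairwise non-affinely-homeomorphic. My plan is to take each fiber to be a Bauer simplex $\Delta_\beta=P(X_\beta)$ of probability measures on a compact metrizable extreme boundary $X_\beta$, arranged so that the $X_\beta$ are pairwise non-homeomorphic; the $\Delta_\beta$ are then pairwise non-isomorphic. Since the set of homeomorphism types of compact metric spaces has cardinality $2^{\aleph_0}$ while $|F\setminus\{0\}|\leq 2^{\aleph_0}$, such an injective assignment exists. Embedding every $X_\beta$ in the Hilbert cube $Q$ and $T(A)$ as a closed convex subset of the universal Bauer simplex $P(Q)$, I would set
\[
S_2=(T(A)\times\{0\})\cup\bigcup_{\beta\in F\setminus\{0\}}(\Delta_\beta\times\{\beta\})\subset P(Q)\times\mathbb R,
\]
with $\pi_2$ the projection. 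Separation of points in $\mathcal A(S_2,\pi_2)$ then follows from continuous affine functionals on $P(Q)$ combined with pull-backs of continuous functions $\mathbb R\to\mathbb R$.

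The delicate step is properness of $\pi_2$, i.e., closedness of $S_2$ in $P(Q)\times\mathbb R$. By the Portmanteau theorem, any weak limit $\mu$ of a sequence $\mu_n\in P(X_{\beta_n})$ has support contained in the Hausdorff upper limit of the $X_{\beta_n}$, so the embeddings must be chosen so that, for every $\beta^*\in F$ and every sequence $\beta_n\to\beta^*$ in $F\setminus\{0\}$ with $\beta_n\neq\beta^*$, this upper limit lies in $X_{\beta^*}$ (respectively in the embedded copy of $T(A)$ when $\beta^*=0$). I envision giving every $X_\beta$ a fixed common core---a Cantor set together with a distinguished base point contained in the image of $T(A)$---and a $\beta$-dependent decoration whose homeomorphism type (e.g. its Cantor--Bendixson signature) distinguishes $\beta$, squeezing the decorations so that Hausdorff upper limits along any sequence $\beta_n\to\beta^*$ with $\beta_n\neq\beta^*$ never escape the core plus base point. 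The hard part will be reconciling this squeezing with keeping the decorations pairwise homeomorphically distinct even when $F\setminus\{0\}$ has no isolated points; once a compatible choice is made, $\pi_2$ is proper and Theorem \ref{26-08-21} delivers the desired flow.
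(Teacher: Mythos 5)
Your first item is correct and is essentially the paper's own argument: the bundle $(T(A)\times\{0\})\cup(\{\tau_0\}\times(F\setminus\{0\}))$ with the coordinate projection, fed into Theorem \ref{26-08-21}. (The paper builds it over all of $\mathbb{R}$ and then restricts to $F$, but that is a cosmetic difference.)

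The second item, however, has a genuine gap, and you acknowledge it yourself: the entire weight of the argument rests on producing, for an \emph{arbitrary} closed $F\ni 0$, compact sets $X_\beta\subseteq Q$, $\beta\in F\setminus\{0\}$, that are pairwise non-homeomorphic \emph{and} for which $(T(A)\times\{0\})\cup\bigcup_{\beta}(P(X_\beta)\times\{\beta\})$ is closed in $P(Q)\times\mathbb{R}$ — and this is exactly the step you leave open ("the hard part will be reconciling this squeezing with keeping the decorations pairwise homeomorphically distinct"). A cardinality count gives an injective assignment $\beta\mapsto X_\beta$ but says nothing about the closedness constraint, which is severe when $F$ has no isolated points (e.g.\ $F=\mathbb{R}$ or a Cantor set): then every $\beta^*$ is a limit of other parameters, so every $X_{\beta^*}$ must absorb the Hausdorff upper limits of the nearby $X_{\beta_n}$ while the family stays homeomorphically rigid. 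There is also a separate problem at $\beta^*=0$: $T(A)$ is an arbitrary metrizable Choquet simplex, in general not Bauer, so it cannot be embedded as $P(X_0)$ for a compact $X_0$, and knowing that a weak$^*$ limit of measures $\mu_n\in P(X_{\beta_n})$ has support in some "core" does not place that limit inside the embedded copy of $T(A)$ unless the core degenerates to points of $T(A)$'s image — which conflicts with your requirement that the core (a Cantor set) sit inside every $X_\beta$. So as written the construction does not close up, and no amount of routine checking finishes it.

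The paper avoids all of this by not constructing the varying fibers at all. It quotes Theorem 1.1 of \cite{Th2}, which supplies a flow whose KMS bundle is a proper simplex bundle $(S',\pi')$ with $\pi'(S')=\mathbb{R}$, with a one-point fiber over $0$, and with $\pi'^{-1}(\beta)$ not affinely homeomorphic to $\pi'^{-1}(\beta')$ for $\beta\neq\beta'$. It then forms the fiber product of $(S',\pi')$ with the bundle $(S,\pi)$ from the first item: $S''=\{(x,y)\in S'\times S:\pi'(x)=\pi(y)\}$. Since over each $\beta$ at least one of the two factors is a single point, every fiber of $S''$ is again a simplex — namely $T(A)$ over $0$ and $\pi'^{-1}(\beta)$ over $\beta\neq 0$ — and properness and point separation are inherited from the factors. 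Restricting to $F$ and applying Theorem \ref{26-08-21} finishes the proof. If you want to salvage your approach, you would either have to carry out the delicate Hausdorff-limit construction in full, or, better, follow the paper and import an already existing bundle with pairwise distinct fibers.
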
 
\begin{proof} Given a proper simplex bundle $(S,\pi)$, set $S_F = \pi^{-1}(F)$ and denote by $\pi_F$ the restriction of $\pi$ to $\pi^{-1}(F)$. The pair $(S_F,\pi_F)$ is again a proper simplex bundle.

For the first item, let $T(A)$ be the tracial state space of $A$ and fix an element $\omega_0 \in T(A)$. Let $S$ be the subset
$$
(T(A) \times \{0\}) \cup \left\{(\omega_0,t) : \ t \in \mathbb R \backslash \{0\} \right\}
$$
of the topological product $T(A) \times \mathbb R$. Let $\pi : S \to \mathbb R$ be the canonical projection. Then $(S,\pi)$ is a proper simplex bundle such that $\pi^{-1}(\beta) = \{(\omega_0,\beta)\}$ when $\beta \neq 0$ and such that $\pi^{-1}(0)$ is a copy of $T(A)$. The existence of the desired flow follows from Theorem \ref{26-08-21} by applying it to the bundle $(S_F,\pi_F)$.

For the second item, note that the KMS bundle of the flow described in Theorem 1.1 of \cite{Th2} is a proper simplex bundle $(S',\pi')$ such that $\pi'^{-1}(0)$ contains only one point, $\pi'(S) = \mathbb R$ and such that $\pi'^{-1}(\beta)$ is not affinely homeomorphic to $\pi'^{-1}(\beta')$ when $\beta \neq \beta'$. With $(S,\pi)$ the bundle defined above set
$$
S'' = \left\{ (x,y) \in S'\times S: \ \pi'(x) = \pi(y) \right\} \ .
$$
Define $\pi'' : S'' \to \mathbb R$ by $\pi''(x,y) = \pi(y)$. Then $(S'',\pi'')$ is a proper simplex bundle and an application of Theorem \ref{26-08-21}, this time to $(S''_F,\pi''_F)$, gives the desired flow.
\end{proof}

\section{Proof of the main result}

\subsection{Tools}

The following lemma is an immediate consequence of Lemma 3.1 of \cite{Th3} and Theorem 2.4 of \cite{Th1}.

\begin{lemma}\label{06-08-21} Let $D$ be a $C^*$-algebra. Denote by $\rho \in \Aut(D)$ an automorphism of $D$ and let $q \in D$ a projection in $D$ which is full\footnote{Recall that a projection $q$ in a $C^*$-algebra $A$ is full when $\overline{AqA} = A$. This is automatic when $q \neq 0 $ and $A$ is simple.
} in $D\rtimes_\rho \mathbb Z$. Let $\hat{\rho}$ denote the restriction to $q(D \rtimes_\rho \mathbb Z)q$ of the dual action on $D\rtimes_\rho \mathbb Z$ considered as a $2\pi$-periodic flow. Let $P : D \rtimes_\rho \mathbb Z \to D$ denote the canonical conditional expectation. For each $\beta \in \mathbb R$, the map $\tau \mapsto \tau \circ P|_{q(D \rtimes_\rho \mathbb Z)q}$ is an affine homeomorphism from the set of traces $\tau$ on $D$ that satisfy 
\begin{equation}\label{11-10-21}
\tau \circ \rho = e^{-\beta} \tau \ \text{and} \ \tau(q) =1,
\end{equation}
onto the simplex of $\beta$-KMS states for $\hat{\rho}$.\end{lemma}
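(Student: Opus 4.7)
The plan is to recover the statement as the composition of two affine homeomorphisms drawn directly from the cited results, with only a short bookkeeping step in between. First I would use that the dual action $\hat{\rho}$ fixes $D$ pointwise in $D \rtimes_\rho \mathbb Z$; in particular it fixes the projection $q \in D$, so $\hat{\rho}$ restricts to a well-defined $2\pi$-periodic flow on the corner $q(D \rtimes_\rho \mathbb Z)q$. Since $q$ is assumed full, Theorem 2.4 of \cite{Th1} then provides an affine homeomorphism between the set of $\beta$-KMS weights $\omega$ of $\hat{\rho}$ on $D \rtimes_\rho \mathbb Z$ satisfying the normalization $\omega(q) = 1$ and the simplex of $\beta$-KMS states for the restricted flow on $q(D \rtimes_\rho \mathbb Z)q$, realized in one direction simply by the restriction map $\omega \mapsto \omega|_{q(D \rtimes_\rho \mathbb Z)q}$.

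Second, I would invoke Lemma 3.1 of \cite{Th3} to parametrize the $\beta$-KMS weights of $\hat{\rho}$ on the full crossed product by traces on $D$: the map $\tau \mapsto \tau \circ P$ is an affine homeomorphism from the set of traces $\tau$ on $D$ satisfying the scaling condition $\tau \circ \rho = e^{-\beta}\tau$ onto the set of $\beta$-KMS weights of $\hat{\rho}$. Since $q \in D$ and the conditional expectation $P$ acts as $\id$ on $D$, the equality $(\tau \circ P)(q) = \tau(q)$ translates the normalization condition $\omega(q) = 1$ on the weight side into the normalization condition $\tau(q) = 1$ on the trace side.

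Composing these two affine homeomorphisms yields exactly the claimed map $\tau \mapsto (\tau \circ P)|_{q(D \rtimes_\rho \mathbb Z)q}$, and its range is the simplex of $\beta$-KMS states of $\hat{\rho}$ on the corner. The only thing to verify is that the intermediate normalization conditions match across the composition, which is immediate from $P(q) = q$; no genuine obstacle arises, and this is precisely why the lemma is described as an immediate consequence of the two cited results.
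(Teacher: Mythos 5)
Your proposal is correct and matches the paper's approach exactly: the paper offers no argument beyond declaring the lemma an immediate consequence of Lemma 3.1 of \cite{Th3} and Theorem 2.4 of \cite{Th1}, and your write-up simply makes explicit the composition of those two affine homeomorphisms together with the normalization bookkeeping $(\tau\circ P)(q)=\tau(q)$. Nothing further is needed.
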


In this lemma the topology on the set of traces of $D$ with the properties \eqref{11-10-21} is given by pointwise convergence on elements from the corner $qDq$ of $D$.

When $D$ is an AF algebra it is well-known that the set of its traces can be identified, via the map $\tau \mapsto \tau_*$, with the set $\Hom^+(K_0(D),\mathbb R)$ of non-zero positive homomorphisms $\phi :  K_0(D) \to \mathbb R$; a fact stated as Lemma 3.5 in \cite{Th3}.
In the setting of Lemma \ref{06-08-21} this implies that when $D$ is an AF algebra the KMS spectrum and the structure of the KMS states for $\hat{\rho}$ can be determined directly from the pair $(K_0(D),\rho_*)$. To see how, we note that by Remark 3.3 in \cite{LN} every $\beta$-KMS state $\tau$ for $\hat{\rho}$ on $q(D \rtimes_\rho\mathbb Z)q$ extends uniquely to a $\beta$-KMS weight $\hat{\tau}$ for the dual action. Since $D$ is the fixed point algebra for the dual action the restriction of $\hat{\tau}$ to $D$ is a trace on $D$, yielding a map
\begin{equation}\label{08-08-21b}
\tau \mapsto \left( \hat{\tau}|_D\right)_* 
\end{equation}
from the set of $\beta$-KMS states $\tau$ for $\hat{\rho}$ to $\Hom^+(K_0(D),\mathbb R)$. Therefore Lemma \ref{06-08-21} has the following consequence.

\begin{cor}\label{06-08-21a} In the setting of Lemma \ref{06-08-21} assume that $D$ is an AF algebra. For each $\beta \in \mathbb R$ the map \eqref{08-08-21b} is an affine homeomorphism from the set of $\beta$-KMS states for $\hat{\rho}$ on $q(D \rtimes_\gamma\mathbb Z)q$ onto the set of positive homomorphisms $\phi \in \Hom^+(K_0(D),\mathbb R)$ that satisfy 
\begin{equation}\label{11-10-21a}
\phi \circ \rho_* = e^{-\beta} \phi \ \text{and} \ \phi([q]) =1.
\end{equation}
\end{cor}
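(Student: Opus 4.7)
The plan is to obtain the corollary by composing the affine homeomorphism of Lemma \ref{06-08-21} with the standard identification between traces on the AF algebra $D$ and positive homomorphisms on $K_0(D)$, and then to recognize the resulting map as \eqref{08-08-21b}.

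First I would identify the inverse of the map $\tau \mapsto \tau\circ P|_{q(D\rtimes_\rho\mathbb{Z})q}$ from Lemma \ref{06-08-21} as $\omega \mapsto \hat{\omega}|_D$. For a trace $\tau$ on $D$ satisfying \eqref{11-10-21}, a direct KMS-condition check on the dense $*$-subalgebra spanned by finite sums $\sum_n a_n u^n$ (with $u$ the canonical unitary implementing $\rho$ and the dual action scaling $u$ by $e^{it}$) shows that $\tau \circ P$ is a $\beta$-KMS weight for $\hat{\rho}$ on $D\rtimes_\rho\mathbb{Z}$. Its restriction to the corner is precisely the $\beta$-KMS state $\omega = \tau\circ P|_{q(D\rtimes_\rho\mathbb{Z})q}$, so the uniqueness of the KMS-weight extension (Remark 3.3 of \cite{LN}) forces $\hat{\omega} = \tau\circ P$, and because $P$ is the identity on $D$ this gives $\hat{\omega}|_D = \tau$.

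Next I would invoke Lemma 3.5 of \cite{Th3}: for the AF algebra $D$, the assignment $\tau \mapsto \tau_*$ is an affine homeomorphism from the set of traces on $D$ onto $\Hom^+(K_0(D),\mathbb{R})$. The scaling condition $\tau\circ\rho = e^{-\beta}\tau$ in \eqref{11-10-21} transports to the $K_0$-level identity $\phi\circ\rho_* = e^{-\beta}\phi$, since for every projection $p \in D$ one has $\tau_*([\rho(p)]) = \tau(\rho(p)) = e^{-\beta}\tau(p) = e^{-\beta}\tau_*([p])$; similarly $\tau(q) = 1$ becomes $\phi([q]) = 1$.

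Composing the inverse of the map from Lemma \ref{06-08-21} with this $K_0$-identification yields precisely the assignment $\omega \mapsto (\hat{\omega}|_D)_*$ of \eqref{08-08-21b}, and produces an affine homeomorphism onto the set of $\phi \in \Hom^+(K_0(D),\mathbb{R})$ satisfying \eqref{11-10-21a}. The only mildly nontrivial step is the weight-theoretic one — verifying that $\tau\circ P$ really is a $\beta$-KMS weight for $\hat{\rho}$ so that, by uniqueness, it must agree with $\hat{\omega}$ — but this reduces to a routine computation from the explicit form of $P$ and the dual action together with the scaling property of $\tau$.
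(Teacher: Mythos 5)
Your proposal is correct and follows essentially the same route as the paper: the paper defines the map \eqref{08-08-21b} via the unique KMS-weight extension of Remark 3.3 in \cite{LN} and then presents the corollary as the composition of the affine homeomorphism of Lemma \ref{06-08-21} with the identification $\tau\mapsto\tau_*$ of traces on the AF algebra $D$ with $\Hom^+(K_0(D),\mathbb R)$. The only detail you add beyond what the paper records explicitly is the verification that $\tau\circ P$ is itself a $\beta$-KMS weight extending the corner state, so that by uniqueness $\hat{\omega}|_D=\tau$; this is exactly the computation the paper leaves implicit.
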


Here the topology on the elements from $\Hom^+(K_0(D),\mathbb R)$ with the properties \eqref{11-10-21a} is given by pointwise convergence on $\{x \in K_0(D): \ 0 \leq x \leq [q]\}$.

Corollary \ref{06-08-21a} will be complemented by the following lemma which helps to control the Elliott invariant of $q(D \rtimes_\gamma\mathbb Z)q$, and to ensure that it is classified by it. It follows from Lemma 3.4 of \cite{Th3}, which is based on arguments from \cite{Sa}, \cite{MS1} and \cite{MS2}.

\begin{lemma}\label{08-08-21a} Let $D$ be a stable AF algebra such that $K_0(D)$ has large denominators.\footnote{An ordered group $(G,G^+)$ has large denominators when the following condition holds: For any $a \in G^+$ and any $n \in \mathbb N$ there are an element $b \in G$ and an $m \in \mathbb N$ such that $ nb\leq a \leq mb$; see \cite{Ni}. } For any order automorphism $\alpha \in \Aut(K_0(D))$ of $K_0(D)$ there is an automorphism $\gamma \in \Aut(D)$ of $D$ such that\begin{enumerate}
\item[(a)] $\gamma_* = \alpha$ on $K_0(D)$,
\item[(b)] the restriction map 
$\mu \ \mapsto \ \mu|_D$
is a bijection from traces $\mu $ on $D \rtimes_{\gamma} \mathbb Z$ onto the $\gamma$-invariant traces on $D$,  and
\item[(c)] $D \rtimes_{\gamma} \mathbb Z$ is $\mathcal Z$-stable; that is, $(D \rtimes_{\gamma} \mathbb Z)\otimes \mathcal Z \simeq D \rtimes_{\gamma} \mathbb Z$ where $\mathcal Z$ denotes the Jiang-Su algebra, \cite{JS}. 
\end{enumerate} 
\end{lemma}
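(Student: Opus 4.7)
The plan is to realize $\gamma$ as a composition $\gamma = \gamma_0 \circ \beta$, where $\gamma_0 \in \Aut(D)$ is any automorphism inducing $\alpha$ on $K_0(D)$ and $\beta$ is an approximately inner automorphism chosen to endow $\gamma$ with a Rokhlin-type property. The existence of such a $\gamma_0$ is the classical Elliott classification result for automorphisms of AF algebras at the level of $K_0$. Since $\beta_* = \id$ on $K_0(D)$ for any approximately inner $\beta$, condition (a) will be automatic for the composition, independently of the choice of $\beta$.

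The crux is then the construction of $\beta$. Using the large denominators hypothesis, one can, inside every finite dimensional subalgebra $F_n$ of an AF filtration $D = \overline{\bigcup_n F_n}$, cut any projection into arbitrarily many mutually equivalent subprojections. Combined with the stability of $D$, this permits the iterative construction of Rokhlin towers compatible with $\gamma_0$; the desired approximately inner $\beta$ is then produced as a limit of inner perturbations by unitaries implementing cyclic permutations along these towers. The outcome is a tracial Rokhlin-type property for $\gamma = \gamma_0 \circ \beta$ in the sense developed by Sato in \cite{Sa} and refined by Matui-Sato in \cite{MS1}, \cite{MS2}.

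With this property in hand the last two conclusions follow from standard machinery. For (b), any trace $\mu$ on $D \rtimes_\gamma \mathbb Z$ restricts to a $\gamma$-invariant trace on $D$ by applying the trace property to the implementing unitary $u$; conversely, the outerness of every non-trivial power $\gamma^n$ that is encoded in the Rokhlin property forces $\mu(x u^n) = 0$ for all $x \in D$ and $n \neq 0$, so $\mu = \mu|_D \circ P$ and the inverse of the restriction map is $\tau \mapsto \tau \circ P$. For (c), the Matui-Sato $\mathcal Z$-stability transfer result applies directly: the Rokhlin-type property of $\gamma$, combined with the $\mathcal Z$-stability of the AF algebra $D$ itself (ensured by the large denominators assumption), yields $\mathcal Z$-stability of $D \rtimes_\gamma \mathbb Z$.

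The principal technical obstacle is the simultaneous control of the Rokhlin towers and of the $K_0$-action along the AF telescope. One has to arrange, at each stage of the filtration, approximately cyclic permutations implementable by unitaries, while keeping the cumulative perturbation inside the approximately inner part of $\Aut(D)$ so that (a) is preserved in the limit. Organizing this coherently — essentially the intertwining argument performed in Lemma 3.4 of \cite{Th3}, drawing on \cite{Sa}, \cite{MS1} and \cite{MS2} — is the delicate part of the construction.
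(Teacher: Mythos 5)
You have the right overall architecture --- realize $\alpha$ by some $\gamma_0\in\Aut(D)$ via Elliott's classification \cite{E1}, perturb inside the approximately inner automorphisms to gain a Rokhlin-type property, and then invoke Sato/Matui--Sato for (b) and (c) --- and this does match the spirit of the proof the paper points to (the lemma is not proved here; it is quoted from Lemma 3.4 of \cite{Th3}). But the step you yourself flag as the ``principal technical obstacle'' is a genuine gap rather than a technicality. You do not explain how to build Rokhlin towers ``compatible with $\gamma_0$'' inside the finite-dimensional stages of a filtration that $\gamma_0$ has no reason to preserve, why the infinite sequence of inner perturbations converges to an automorphism at all (this requires an intertwining, not a naive limit), why the limit is approximately inner, or why the composite $\gamma_0\circ\beta$ ends up with the required Rokhlin-type property uniformly over \emph{all} $\gamma$-invariant traces. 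That is the entire content of the lemma, and as written it is asserted rather than proved. A second, smaller imprecision: in (b) you appeal to ``outerness of every non-trivial power'' to force $\mu(xu^n)=0$ for $n\neq 0$; plain outerness of $\gamma^n$ is not sufficient for this. What is needed is strong outerness, i.e.\ that $\gamma^n$ is not weakly inner in the GNS representation of each invariant trace --- which is exactly what the Rokhlin-type property supplies, so the conclusion is right but the stated reason is not.

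The argument in \cite{Th3} sidesteps the hard construction by a tensor trick which you should adopt. Large denominators makes the AF algebra $D$ approximately divisible, hence $\mathcal Z$-stable (cf.\ \cite{TW}), so one may identify $D$ with $D\otimes\mathcal Z$ and take $\gamma=\gamma_0\otimes\rho$, where $\rho$ is Sato's automorphism of the Jiang--Su algebra $\mathcal Z$ with the (weak) Rokhlin property \cite{Sa}. Since $K_0(\mathcal Z)=\mathbb Z$ with $\rho_*=\id$, property (a) is preserved; the Rokhlin property of $\rho$ gives strong outerness of all non-zero powers of $\gamma$ with respect to every invariant trace, which yields (b); and the Matui--Sato machinery \cite{MS1}, \cite{MS2} then yields (c). Note that $\id\otimes\rho$ is approximately inner on $D\otimes\mathcal Z$, so this construction does produce a $\gamma$ of the form $\gamma_0'\circ\beta$ that you want --- but the existence proof should pass through $\mathcal Z$ rather than through hand-built towers along the AF telescope.
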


Given a proper simplex bundle $(S,\pi)$ and a closed subset $F \subseteq \mathbb R$ we denote by $(S_F,\pi_F)$ the proper simplex bundle with $S_F = \pi^{-1}(F)$ and $\pi_F$ is the restriction of $\pi$ to $S_F$. The following lemma relates $\mathcal A(S_F,\pi_F)$ to $\mathcal A(S,\pi)$ and it will be a crucial tool in the following.

\begin{lemma}\label{03-09-21a} Let $(S,\pi)$ be a proper simplex bundle and $F \subseteq \mathbb R$ a closed subset. 
\begin{itemize}
\item[(1)] The map $\mathcal A(S,\pi) \to \mathcal A(S_F,\pi_F)$ given by restriction is surjective.
\item[(2)] Let $f_1,f_2,g_1,g_2 \in \mathcal A(S,\pi)$ such that $f_i(x) < g_j(x)$ for all $x \in S$ and all $i,j \in \{1,2\}$. Assume that there is an element $h^F \in \mathcal A(S_F,\pi_F)$ such that 
$$
f_i(x) < h^F(x) <  g_j(x) \ \ \forall x \in S_F, \ \forall i,j \in \{1,2\} .
$$ 
There is an element $h \in \mathcal A(S,\pi)$ such that $h(y) = h^F(y)$ for all $y \in S_F$ and 
$$
f_i(x) < h(x) < g_j(x) \ \ \forall x \in S, \ \forall i,j \in \{1,2\} .
$$
\end{itemize}
\end{lemma}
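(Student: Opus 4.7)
The plan is to work gap by gap on $\mathbb R \setminus F$, decomposed into its (at most countably many) open component intervals, using the properness of $\pi$ to reduce local problems to compact pieces of $S$. The technical backbone common to both parts is a \emph{single-fiber extension principle}: for every $t_0 \in \mathbb R$ and every continuous affine function $k$ on the Choquet simplex $\pi^{-1}(t_0)$ there is $\tilde k \in \mathcal A(S,\pi)$ with $\tilde k|_{\pi^{-1}(t_0)} = k$. The image of the restriction $\mathcal A(S,\pi) \to A(\pi^{-1}(t_0))$ is a subspace containing the constants and separating points on the simplex, hence uniformly dense by the Stone--Weierstrass theorem for continuous affine functions on a compact convex set. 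Density is upgraded to exact extension by iteration: at the $n$th step approximate the residual by $g_n \in \mathcal A(S,\pi)$ to within $2^{-n}$ on $\pi^{-1}(t_0)$ and multiply by $\phi_n(\pi(\cdot))$, where $\phi_n \in C_c(\mathbb R)$ is a bump with $\phi_n(t_0) = 1$ and support shrinking to $\{t_0\}$. Each correction $\phi_n(\pi(\cdot))\, g_n$ stays in $\mathcal A(S,\pi)$ (its coefficient $\phi_n \circ \pi$ is fiberwise constant), and has sup norm on $S$ controlled by its norm on $\pi^{-1}(t_0)$ by compactness of that fiber; the telescoping sum then converges uniformly to the desired $\tilde k$.

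Granting this, part (1) follows by interpolation across the gaps. For a bounded component $(a,b) \subset \mathbb R \setminus F$ with $a,b \in F$, pick $\tilde h_a, \tilde h_b \in \mathcal A(S,\pi)$ extending $h^F|_{\pi^{-1}(a)}$ and $h^F|_{\pi^{-1}(b)}$, and define on $\pi^{-1}([a,b])$
\[
h(x) \;=\; \frac{b - \pi(x)}{b - a}\,\tilde h_a(x) + \frac{\pi(x) - a}{b - a}\,\tilde h_b(x) .
\]
The coefficients are constants on each fiber, so $h$ is fiberwise affine and matches $h^F$ at $\pi^{-1}(\{a,b\})$. Unbounded components are handled analogously with a single boundary value (or any element of $\mathcal A(S,\pi)$ when no boundary is adjacent). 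Gluing these gap solutions with $h^F$ on $S_F$ produces the required global extension.

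For part (2), first produce a \emph{sandwich witness} $\tilde m \in \mathcal A(S,\pi)$ with $f_i < \tilde m < g_j$ on all of $S$. By Edwards' strict interpolation theorem applied fiberwise to the continuous functions $\max(f_1,f_2)$ (fiberwise convex) and $\min(g_1,g_2)$ (fiberwise concave), strictly separated by hypothesis, one obtains for each $t \in \mathbb R$ a continuous affine function on $\pi^{-1}(t)$ strictly between them; lift it to $\tilde m_t \in \mathcal A(S,\pi)$ via the single-fiber principle and patch the family $\{\tilde m_t\}$ with a locally finite partition of unity in $t$ subordinate to the open cover $\{J_t\}$ of $\mathbb R$, where $J_t$ is a neighborhood of $t$ on which $f_i < \tilde m_t < g_j$ persists by continuity. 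The result $\tilde m$ lies in $\mathcal A(S,\pi)$ (the fiberwise-constant partition coefficients preserve fiberwise affineness) and satisfies the strict sandwich because a convex combination of values in an open interval lies in that interval. Now extend $h^F$ to $h_0 \in \mathcal A(S,\pi)$ via part (1), and let $V = \{x \in S : h_0(x) \notin (\max_i f_i(x),\, \min_j g_j(x))\}$, a closed subset of $S$. Since $h_0 = h^F$ satisfies the strict sandwich on $S_F$, $V \cap S_F = \emptyset$, hence $\pi(V) \cap F = \emptyset$; and since $\pi$ is proper, $\pi$ is a closed map, so $\pi(V)$ is closed in $\mathbb R$. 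Choose a continuous $\eta : \mathbb R \to [0,1]$ with $\eta^{-1}(0) = F$ and $\eta^{-1}(1) = \pi(V)$---explicitly $\eta(t) = \dist(t,F)/(\dist(t,F) + \dist(t,\pi(V)))$ when $\pi(V) \neq \emptyset$, and $\eta \equiv 0$ otherwise---and define
\[
h(x) \;=\; (1 - \eta(\pi(x)))\, h_0(x) + \eta(\pi(x))\, \tilde m(x) .
\]
Then $h \in \mathcal A(S,\pi)$ is fiberwise affine, equals $h^F$ on $S_F$ (where $\eta \circ \pi$ vanishes), and satisfies the strict sandwich at every $x$: on $\{\eta \circ \pi = 0\} = S_F$ we have $h = h^F$; on $\{\eta \circ \pi = 1\} = \pi^{-1}(\pi(V)) \supseteq V$ we have $h = \tilde m$; and on the transition region $\{0 < \eta \circ \pi < 1\}$ the point $x$ avoids $V$ (since $\pi(x) \notin \pi(V)$), so both $h_0(x)$ and $\tilde m(x)$ satisfy the strict sandwich and the convex combination $h(x)$ does as well.

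The main technical obstacle throughout is the bookkeeping in the single-fiber iteration and in the locally finite partition of unity defining $\tilde m$; compactness of the fibers and paracompactness of $\mathbb R$ provide the control needed for uniform convergence and for preservation of the strict sandwich under the convex combinations involved.
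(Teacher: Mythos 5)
Your single-fiber extension principle is correct (the density of the image of $\mathcal A(S,\pi)\to A(\pi^{-1}(t_0))$ follows from the separation property, and the telescoping correction with shrinking bumps is controlled because $\pi$ is proper, so $\sup_{\pi^{-1}(U)}|g|$ can be forced close to $\sup_{\pi^{-1}(t_0)}|g|$ for $U$ small), and part (2) is sound \emph{given} part (1). The genuine gap is in the gluing step of part (1). The closed cover of $S$ by $S_F$ together with the sets $\pi^{-1}(\overline{I_k})$, $I_k$ the components of $\mathbb R\setminus F$, is not locally finite when infinitely many gaps accumulate at a point of $F$ (e.g.\ $F=\{0\}\cup\{1/n:n\in\mathbb N\}$, or a Cantor set), and since the extensions $\tilde h_{a_k},\tilde h_{b_k}$ are chosen arbitrarily and independently for each gap, the glued function need not be continuous at points of $\pi^{-1}(F)$ over such accumulation points. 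Concretely, with $S=[0,1]\times\mathbb R$, $\pi$ the second projection, $h^F=0$, and gaps $I_n=(\frac{1}{n+1},\frac1n)$, one may take $\tilde h_{b_n}=0$ and $\tilde h_{a_n}(s,t)=\phi_n(t)s$ with $\phi_n(\frac{1}{n+1})=0$ and $\phi_n=1$ at the midpoint $t_n^*$ of $I_n$; then your interpolant equals $s/2$ at $(s,t_n^*)$, which does not converge to $h^F(s,0)=0$ as $n\to\infty$. So the proof as written establishes the lemma only when $\mathbb R\setminus F$ has finitely many components.

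The strategy is salvageable, but it needs a uniform approximation step rather than independent exact extensions gap by gap: for instance, first show that for every $N$ and $\epsilon>0$ there is $g\in\mathcal A(S,\pi)$ with $|g-h^F|<\epsilon$ on $S_{F\cap[-N,N]}$, by covering the compact set $F\cap[-N,N]$ with finitely many intervals $J_i$ centred at points $t_i\in F$ on which $|h^F-\tilde h_{t_i}|<\epsilon$ (this uses your single-fiber extensions, continuity of $h^F$ on $S_F$, and properness) and averaging the $\tilde h_{t_i}$ with a partition of unity in the variable $\pi(x)$ subordinate to $\{J_i\}$; exact extension then follows by iterating this approximation with cut-offs, exactly as in your single-fiber argument. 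This is essentially what the paper does, except that it delegates the compact case to Lemmas 2.2 and 2.3 of \cite{BEK2} and only patches over the intervals $[-n,n]$, which form a locally finite family, so the accumulation problem never arises there.
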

\begin{proof} (1) Let $h \in \mathcal A(S_F,\pi_F)$. For each $n \in \mathbb N$ the pair $(S_{[-n,n]},\pi_{[-n,n]})$ is a compact simplex bundle in the sense of \cite{BEK2} and it follows from Lemma 2.2 in \cite{BEK2} that there are elements $f_n \in \mathcal A(S_{[-n,n]},\pi_{[-n,n]})$ such that the restriction $f_n|_{S_{F \cap [-n,n]}}$ of $f_n$ to $S_{F \cap [-n,n]}$ agrees with $h|_{S_{F \cap [-n,n]}}$. For $n \in \mathbb N$ choose a continuous function $\chi_n : \mathbb R \to [0,1]$ such that $\chi_n (t) = 1$ for $t \leq n-\frac{1}{2}$ and $\chi_n(t) = 0$ for $t \geq n$. Define $f'_n : S_{[-n,n]} \to \mathbb R$ recursively by
$$
f'_1(x) = (1-\chi_1(|\pi(x)|))f_2(x) + \chi_1(|\pi(x)|)f_1(x) \ ,
$$
and then $f'_n$ for $n \geq 2$ such that $f'_{n}(x) = f'_{n-1}(x)$ when $x \in \pi^{-1}([-n+1,n-1])$ and $f'_n(x) =  (1-\chi_n(|\pi(x)|))f_{n+1}(x) + \chi_n(|\pi(x)|)f_n(x)$ when $x \in \pi^{-1}([n-1,n] \cup [-n,-n+1])$. Then $f'_n|_{S_{F \cap [-n,n]}} = h|_{S_{F \cap [-n,n]}}$ and since $f'_{n+1}$ extends $f'_n$ for all $n$, there is an element $f \in \mathcal A(S,\pi)$ such that $f|_{S_{[-n,n]}} = f'_n$. This element $f$ extends $h$.

The assertion (2) follows in a similar way on using Lemma 2.3 in \cite{BEK2}.
\end{proof}

\subsection{Dimension groups from proper simplex bundles}\label{I}

Given a Choquet simplex $\Delta$, as usual denote by $\Aff \Delta$ the set of real-valued continuous and affine functions on $\Delta$. Fix a proper simplex bundle $(S,\pi)$ with $\pi^{-1}(0)$ non-empty. Let $H$ be a torsion free abelian group and $\theta : H \to \Aff \pi^{-1}(0)$ a homomorphism. We assume
\begin{itemize}
\item[(1)] there is an element $u \in H$ such that $\theta(u) = 1$, and
\item[(2)] $\theta(H)$ is dense in $\Aff \pi^{-1}(0)$.
\end{itemize}
Set 
$$
H^+ = \left\{h \in H: \ \theta(h)(x) > 0 \ \ \forall x \in \pi^{-1}(0) \right\} \cup \{0\} \ .
$$
Then $(H,H^+)$ is a simple dimension group; see \cite{EHS}.

It follows from (1) of Lemma \ref{03-09-21a} that the map $r : \mathcal A(S,\pi) \to \Aff \pi^{-1}(0)$ given by restriction is surjective and we can therefore choose a linear map $L :  \Aff \pi^{-1}(0) \to \mathcal A(\pi,S)$ such that $r\circ L = \id$. We arrange, as we can, that $L(1) = 1$. Define $\hat{L} : \bigoplus_{\mathbb Z} H \to \mathcal A(S,\pi)$ by
$$
\hat{L}\left((h_n)_{n \in \mathbb Z}\right)(x) = \sum_{n \in \mathbb Z} L(\theta(h_n))(x)e^{n\pi(x)} \ .
$$ 
Let $\mathbb Q[e^{-\pi}, 1-e^{-\pi}]$ denote the $\mathbb Q$-linear span of the functions defined on $S \backslash \pi^{-1}(0)$ by
\begin{equation}\label{07-09-21a}
 x \mapsto e^{n\pi(x)}(1-e^{-\pi(x)})^l
 \end{equation}
for some $n, l \in \mathbb Z$. For each $k \in \mathbb N$, choose continuous functions $\psi^0_k, \psi^\pm_k : \mathbb R \to [0,1]$ such that 
\begin{itemize}
\item[{}] $\psi^0_k(t) = 1, \ -\frac{1}{2k} \leq t \leq \frac{1}{2k}$, 
\item[{}] $\psi^-_k(t) = 1, \ t \leq - \frac{1}{k}$,
\item[{}] $\psi^+_k(t) = 1, \ t \geq \frac{1}{k}$, and
\item[{}] $\psi_k^-(t) + \psi_k^0(t) + \psi_k^+(t) = 1$ for all $t \in \mathbb R$. 
\end{itemize}
Consider the countable subgroup of $ \mathcal A(S,\pi)$
$$
G_k :=  \mathbb Q[e^{-\pi}, 1-e^{-\pi}]\psi^-_k \circ \pi + \hat{L}(\bigoplus_{\mathbb Z} H ) \psi^0_k \circ \pi  + \mathbb Q[e^{-\pi}, 1-e^{-\pi}]\psi^+_k \circ \pi \ .
$$
In what follows we denote the support of a real-valued function $f$ by $\supp f$. Let $\mathcal A_{00}(S,\pi)$ denote the set of elements $f$ from $\mathcal A(S,\pi)$ for which $\supp f$ is compact and contained in $S \backslash \pi^{-1}(0)$. Since the topology of $S$ is second countable we can choose a countable subgroup $G_{00}$ of $\mathcal A_{00}(S,\pi)$ with the following density property:

\begin{TOM}\label{07-09-21} For all $N \in \mathbb N$, all $\epsilon > 0$ and all $f \in \mathcal A_{00}(S,\pi)$ with 
$\supp f \subseteq \pi^{-1}(]-N,N[ \backslash \{0\})$, there is $g \in G_{00}$ such that 
$$\sup_{x \in S} |f(x) -g(x)| < \epsilon
$$ 
and $\supp g \subseteq \pi^{-1}(]-N,N[ \backslash \{0\})$.
\end{TOM}

When $f_1 \in \hat{L}(\bigoplus_{\mathbb Z} H ) $ and $f^\pm_2 \in  \mathbb Q[e^{-\pi}, 1-e^{-\pi}]$ the difference between 
$$
f^-_2 \psi^-_k \circ \pi + f_1\psi^0_k \circ \pi  + f^+_2\psi^+_k \circ \pi
$$ 
and 
$$
f^-_2 \psi^-_{k+1} \circ \pi + f_1\psi^0_{k+1} \circ \pi  + f_2^+\psi^+_{k+1} \circ \pi
$$ 
is an element of $\mathcal A_{00}(S,\pi)$, and so by enlarging $G_{00}$ we can ensure that
\begin{equation}\label{01-09-21c}
G_k + G_{00} \subseteq G_{k+1} + G_{00} \ .
\end{equation}
Let $\alpha_0 \in \Aut \mathcal A(S,\pi)$ be defined by
$$
\alpha_0(f)(x) = e^{-\pi(x)}f(x) \ .
$$
Since $\alpha_0(\mathcal A_{00}) =\mathcal A_{00}$ and since 
$(1-e^{-\pi})^{-1}\mathcal A_{00}(S,\pi) \subseteq \mathcal A_{00}(S,\pi)$,
we can enlarge $G_{00}$ further to achieve that 
\begin{equation}\label{07-09-21d}
\alpha_0(G_{00}) = G_{00}
\end{equation}
and that 
\begin{equation}\label{07-09-21e}
(\id - \alpha_0)(G_{00}) = G_{00} .
\end{equation}
We define
$$
G = \bigcup_{k=1}^\infty (G_k + G_{00}) .
$$
Let $\sigma \in \Aut \left( \bigoplus_{n \in \mathbb Z} H\right)$ denote the shift:
$$
\sigma\left((h_n)_{n \in \mathbb Z}\right) = \left( h_{n+1}\right)_{n \in \mathbb Z} \ .
$$
Then $\alpha_0 \circ \hat{L} = \hat{L} \circ \sigma$, which implies that $\alpha_0(G_k) = G_k$ and hence 
$$\alpha_0 (G) = G .
$$ 
Set
$$
\mathcal A(S,\pi)^+ = \left\{ f \in \mathcal A(S,\pi): \ f(x) > 0 \ \forall x \in S \right\} \cup \{0\} \ 
$$
and
$$
G^+ = G \cap \mathcal A(S,\pi)^+ \ .
$$ 

\begin{lemma}\label{01-09-21} The pair $(G,G^+)$ has the following properties.
\begin{itemize}
\item[(1)] $G^+ \cap (-G^+) = \{0\}$.
\item[(2)] $G = G^+ - G^+$.
\item[(3)] $(G,G^+)$ is unperforated, i.e., $n \in \mathbb N \backslash \{0\}, \ g \in G, \ ng \in G^+ \Rightarrow g \in G^+$. \item[(4)] $(G,G^+)$ has the strong Riesz interpolation property, i.e. if $f_1,f_2,g_1,g_2 \in G$ and $f_i < g_j$ in $G$ for all $i,j \in \{1,2\}$, then there is an element $h \in G$ such that
$$
f_i < h < g_j
$$
for all $i,j \in \{1,2\}$. 
\end{itemize}
\end{lemma}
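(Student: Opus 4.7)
The plan is to verify the four properties in turn, with (4) being the substantive one.

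Properties (1) and (3) follow immediately from the definition of $G^+ = G \cap \mathcal A(S,\pi)^+$, in which nonzero elements are strictly pointwise positive on $S$. A nonzero $f \in G^+\cap(-G^+)$ would satisfy $f(x)>0$ and $-f(x)>0$ at every $x\in S$, which is impossible, giving (1). For (3), if $n\geq 1$ and $g \in G$ with $ng\in G^+$ and $g\neq 0$, then $(ng)(x)>0$ for every $x$, hence $g(x)>0$ for every $x$, so $g\in G^+$.

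For (2), I plan to produce for each $g\in G$ a majorant $u\in G^+$ with $u(x)>g(x)$ on $S$; then $g=u-(u-g)$ exhibits $g$ as a difference of positive elements. Write $g=g_k+g_{00}$ with $g_k\in G_k$ and $g_{00}\in G_{00}$; the latter is bounded (compactly supported) and the middle summand of $g_k$ is bounded on $\pi^{-1}([-1/k,1/k])$, while outside that slab $g_k$ consists of $\mathbb Q$-polynomials in $e^{-\pi}$ and $1-e^{-\pi}$ multiplied by $\psi_k^\pm\circ\pi$. The hypotheses $\theta(u)=1$ and $L(1)=1$ give $\hat L((u,0,0,\dots))=1$, so $\psi_k^0\circ\pi\in G_k$ and, for any $N\in\mathbb N$, also $e^{-N\pi}\psi_k^-\circ\pi$ and $e^{N\pi}\psi_k^+\circ\pi$ belong to $G_k$. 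Choosing $C,N\in\mathbb N$ large enough that
$$
u := C\bigl(e^{-N\pi}\psi_k^-\circ\pi + \psi_k^0\circ\pi + e^{N\pi}\psi_k^+\circ\pi\bigr)
$$
dominates $g$ pointwise on $S$ produces the required $u\in G^+$.

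For (4), given $f_1,f_2,g_1,g_2\in G$ with $f_i<g_j$ for all $i,j$, I would first choose $k$ such that all four lie in $G_k+G_{00}$. Since $G_{00}$ vanishes on $\pi^{-1}(0)$ and $\hat L(\bigoplus_\mathbb Z H)\psi_k^0\circ\pi$ restricts at the zero fibre to $\theta(H)$, the restrictions $f_i|_{\pi^{-1}(0)}$ and $g_j|_{\pi^{-1}(0)}$ all lie in $\theta(H)\subseteq \Aff \pi^{-1}(0)$. Using that $(H,H^+)$ is a simple dimension group with Riesz interpolation, that $\theta(H)$ is dense in $\Aff \pi^{-1}(0)$, and that compactness of $\pi^{-1}(0)$ provides a uniform gap $\delta>0$, I would apply Riesz interpolation in $\Aff \pi^{-1}(0)$ to the perturbed pairs $f_i+\delta/3<g_j-\delta/3$ and approximate the resulting affine interpolant within $\delta/6$ by an element $\theta(h_0)$, producing $h_0\in H$ with $f_i|_{\pi^{-1}(0)}<\theta(h_0)<g_j|_{\pi^{-1}(0)}$ strictly. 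Lemma~\ref{03-09-21a}(2) with $F=\{0\}$ and $h^F=\theta(h_0)$ then delivers $h^*\in\mathcal A(S,\pi)$ with $h^*|_{\pi^{-1}(0)}=\theta(h_0)$ and $f_i<h^*<g_j$ on all of $S$.

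The main obstacle is to replace $h^*$ by a bona fide element of $G$ without destroying the strict inequalities. The natural first candidate is $h_1 := \hat L((h_0,0,0,\dots))\psi_k^0\circ\pi\in G_k$, with $h_0$ in the index-zero slot; it agrees with $h^*$ on $\pi^{-1}(0)$ and is supported in $\pi^{-1}([-1/k,1/k])$. Outside that slab, the asymptotic behaviour of $h^*$ must be matched by terms in $\mathbb Q[e^{-\pi},1-e^{-\pi}]\psi_k^\pm\circ\pi\subseteq G_k$, chosen so that the combined function stays strictly between the $f_i$ and $g_j$ at infinity; this reduces to a Riesz-type interpolation between $\mathbb Q$-polynomials in $e^{-\pi},1-e^{-\pi}$, where only finitely many leading exponents occur. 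The residual error on the compact transition region lies in $\mathcal A_{00}(S,\pi)$ and is approximated from $G_{00}$ by Property~\ref{07-09-21}, while splice differences between distinct partition indices are compactly supported in $S\setminus\pi^{-1}(0)$ and absorbed by $G_{00}$ via \eqref{01-09-21c}. Choosing each approximation finer than $\min_{i,j}(g_j-f_i)$ on the relevant compact slab (where the gap is bounded below by a positive constant) yields $h \in G_k+G_{00} = G$ with $f_i<h<g_j$ on $S$.
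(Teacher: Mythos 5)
Your handling of (1)--(3), and the overall architecture of (4) --- interpolation in $\theta(H)$ over the zero fibre, interpolation at infinity inside $\mathbb Q[e^{-\pi},1-e^{-\pi}]$, gluing via Lemma \ref{03-09-21a}(2), and absorption of a compactly supported residue into $G_{00}$ via Property \ref{07-09-21} --- coincide with the paper's proof. However, in (4) the step you describe as ``reduces to a Riesz-type interpolation between $\mathbb Q$-polynomials in $e^{-\pi},1-e^{-\pi}$'' is not a reduction to something known; it is the crux. After clearing denominators one must produce, given rational polynomials $p_1,p_2,q_1,q_2$ with $p_i(y)<q_j(y)$ for all large $y$ in $e^{\pi(S)}$ (which may only be a sequence tending to infinity), a rational polynomial strictly between them for all large real arguments. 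The paper isolates this as Lemma \ref{04-09-21} and proves it by noting that eventual domination of polynomials is governed by the lexicographic order on the coefficient vectors, a total order on $\mathbb Q^{N+1}$ in which one can strictly interpolate after perturbing the constant term. Your sketch supplies no argument for this, so the existence of the asymptotic interpolants $h^\pm$ is unproved.

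There is also a misordering that breaks the final approximation step. You apply Lemma \ref{03-09-21a}(2) with $F=\{0\}$ to obtain $h^*$ and only afterwards try to match its behaviour at infinity. But then the residual $h^*-h_1-h^-\psi_k^-\circ\pi-h^+\psi_k^+\circ\pi$ is not compactly supported --- $h^*$ has no reason to agree with $h^\pm$ outside a compact set --- so it does not lie in $\mathcal A_{00}(S,\pi)$ and Property \ref{07-09-21} cannot be applied to it. The repair, which is what the paper does, is to first find $h_0$, $h^\pm$ and $K^\pm$, and then invoke Lemma \ref{03-09-21a}(2) once with $F=(-\infty,-K^-]\cup\{0\}\cup[K^+,\infty)$, so that the global interpolant literally equals $h^\pm$ on the tails and $h_0$ on the zero fibre; only then is the residue supported in a compact subset of $S\setminus\pi^{-1}(0)$ and approximable from $G_{00}$ within the uniform gap $\delta$.
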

\begin{proof} (1) and (3) are obvious. (2): Let $f \in G$. Then $f \in G_k+ G_{00}$ for some $k \in \mathbb N$ and we can write
$$
f = h^-\psi^-_k\circ \pi + f_0\psi^0_k\circ \pi + h^+\psi^0_k \circ \pi + g
$$
where $h^\pm \in \mathbb Q[e^{-\pi}, 1-e^{-\pi}], \ f_0 \in \hat{L}(\bigoplus_{\mathbb Z} H)$ and $g \in G_{00}$. By definition of $\mathbb Q[e^{-\pi}, 1-e^{-\pi}]$ there are $n,m \in \mathbb N$ and $M > 0$ such that
$$
h^-(x) \leq e^{-n\pi(x)} \ \ \forall x \in \pi^{-1}(]-\infty, -M]) \ ,
$$
and
$$
h^+(x) \leq e^{m\pi(x)} \ \ \forall x \in \pi^{-1}([M,\infty[) \ .
$$
Since $f_0 \psi^0_k \circ \pi$ and $g$ are compactly supported there is $K \in \mathbb N$ such that $f(x) <g(x)$ for all $x \in S$, where
$$
g = K\left(e^{-n\pi} \psi^-_k \circ \pi + \psi^0_k\circ \pi + e^{m\pi}\psi^+_k\circ \pi \right) \ \in \ G^+ \ .
$$
Note that $f =  g -(g-f) \in G^+- G^+$. 

(4): Since $\theta(H)$ has the Riesz interpolation property for the strict order by Lemma 3.1 in \cite{EHS}, there is $h_0 \in \theta(H)$ such that $f_i(x) < h_0(x) < g_j(x)$ for all $i,j$ and all $x \in \pi^{-1}(0)$. We claim that there are elements $h^{\pm} \in \mathbb Q[e^{-\pi}, 1-e^{-\pi}]$ and $K^\pm \in \mathbb N$ such that $f_i(x) < h^-(x) < g_j(x)$ when $\pi(x) \leq -K^-$ and $f_i(x) < h^+(x) < g_j(x)$ when $\pi(x) \geq  K^+$. To find $h^+$ and $K^+$, note that we may assume that $\pi(S)$ contains arbitrarily large positive numbers; otherwise we take $K^+$ larger that $\Sup \pi(S)$ and $h^+ = 0$. By definition of $G$ there is $N \in \mathbb N$ so large that there are polynomials $p_1,p_2,q_1,q_2$ with rational coefficients such that
$$
\left(e^{\pi(x)}(e^{\pi(x)} -1)\right)^N f_i(x)  =  p_i(e^{\pi(x)})
$$
and
$$
\left(e^{\pi(x)}(e^{\pi(x)} -1)\right)^N g_j(x)  =  q_j(e^{\pi(x)})
$$
for all $i,j$ and all large $x$. Then $p_i(y) < q_j(y)$ for all large elements $y$ of $e^{\pi(S)}$ and it follows therefore from Lemma \ref{04-09-21} that there is a polynomial $h'$ with rational coefficients such that $p_i(x) < h'(x) <q_j(x)$ for all $i,j$ and all large $x$. Set
$$
h^+(x) = \left(e^{\pi(x)}(e^{\pi(x)} -1)\right)^{-N} h'(e^{\pi(x)}) \ .
$$
Then $h^+ \in \mathbb Q[e^{-\pi}, 1-e^{-\pi}]$ and if $K^+$ is large enough $f_i(x) < h^+(x) < g_j(x)$ for all $i,j$ and all $x \in \pi^{-1}([K^+,\infty))$. The pair $h^-,K^-$ is constructed in a similar way: Multiply each of the functions from $\{f_1,f_2,g_1,g_2\}$ with the same function of the form
$$
(e^{-\pi})^{N}( e^{-\pi}-1)^{N}
$$
to get them into the form $x \mapsto p(e^{-\pi(x)})$ for $p$ a polynomial with rational coefficients and apply Lemma \ref{04-09-21}.

 Having $h^\pm$ and $K^\pm$ we use the statement (2) of Lemma \ref{03-09-21a} to find $H\in \mathcal A(S,\pi)$ such that $H(x) = h^-(x)$ when $\pi(x) \leq -K^-$, $ H(x) = h^+(x)$ when $\pi(x) \geq K^+$, $H(x) = h_0(x)$ when $x \in \pi^{-1}(0)$, and
$
f_i(x) < H(x) < g_j(x)$
for all $x \in S$ and all $i,j$. Set
$$
H'(x) = H\psi^-_k \circ \pi  + L(h_0) \psi^0_k\circ \pi  + H \psi^+_k\circ \pi   .
$$
If $k$ is large enough we have 
$$
f_i(x) < H'(x) < g_j(x)
$$
for all $x \in S$ and all $i,j$. Set 
$$
H''(x) = H'(x) - L(h_0)\psi^0_k \circ \pi  - h^-\psi^-_k\circ \pi  - h^+\psi^+_k\circ \pi   ,
$$
and note that $\sup H'' \subseteq ]-K^-,K^+[ \backslash \{0\}$. Set $K = \max \{K^-,K^+\}$ and let $\delta > 0$ be given, smaller than $g_j(x) - H'(x)$ and $H'(x) -f_i(x)$ for all $i,j$ and all $x \in \pi^{-1}([-K,K])$.
By Property \ref{07-09-21} there is an element $g' \in G_{00}$ such that 
$$
\supp g'  \subseteq \pi^{-1}(]-K, K[ \backslash \{0\}) \ 
$$
and $\sup_{x \in S}\left|g'(x) - H''(x)\right| < \frac{\delta}{2}$. Then 
$$
h = g' +  L(h_0)\psi^0_k \circ \pi + h^-\psi^-_k\circ \pi + h^+\psi^+_k\circ \pi \ \in \ G
$$ and $f_i(x) < h(x) < g_j(x)$ for all $i,j$ and all $x \in S$.
\end{proof}

\begin{lemma}\label{04-09-21} Let $p_i,q_j, \ i,j \in \{1,2\}$, be polynomials with rational coefficients. Assume that there is a sequence $\{x_n\}$ in $\mathbb R$ such that $\lim_{n \to \infty} x_n = \infty$ and such that $p_i(x_n) < q_j(x_n)$ for all $i,j,n$. It follows that there is a polynomial $h$ with rational coefficients and a $K > 0$ such that
$$
p_i(x) < h(x) < q_j(x)
$$
for all $i,j \in \{1,2\}$ and all $x \geq K$.
\end{lemma}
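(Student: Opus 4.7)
The plan is to extract eventual asymptotic dominance from the hypothesis about the sequence $\{x_n\}$, reduce to a single pair of polynomials by picking the dominating/dominated ones among $p_1,p_2$ and $q_1,q_2$, and then take an average.

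First I would observe the following asymptotic principle: if $P$ is a polynomial with rational coefficients and $P(x_n) > 0$ along some sequence $x_n \to \infty$, then $P(x) > 0$ for all sufficiently large $x$. Indeed $P$ is not identically zero, and the sign of $P(x)$ for large $x$ agrees with the sign of its leading coefficient, which by the sequence hypothesis must be positive (or $P$ is a positive constant). Apply this to each of the four rational polynomials $r_{ij} := q_j - p_i$, using the hypothesis $r_{ij}(x_n) > 0$, to conclude that there is $K_0 > 0$ with $p_i(x) < q_j(x)$ for all $i,j \in \{1,2\}$ and all $x \geq K_0$.

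Next I would pick a dominating $p$ and a dominated $q$. The polynomial $p_2 - p_1$ has eventually constant sign, so there exists $i_0 \in \{1,2\}$ with $p_i(x) \leq p_{i_0}(x)$ for both $i$ and all large $x$; write $p := p_{i_0}$. Moreover, if $p_i \neq p_{i_0}$ as polynomials, then $p_{i_0} - p_i$ has positive leading coefficient, so the inequality $p_i(x) < p(x)$ is in fact strict for sufficiently large $x$. Symmetrically, choose $j_0$ and set $q := q_{j_0}$ so that $q(x) \leq q_j(x)$ for all $j$ and all large $x$, strictly when $q_j \neq q_{j_0}$.

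Finally I would define
\[
h(x) := \tfrac{1}{2}\bigl(p(x) + q(x)\bigr) ,
\]
which is a polynomial with rational coefficients. By the first paragraph applied to the pair $(p,q)$, we have $p(x) < q(x)$ for large $x$, and hence $p(x) < h(x) < q(x)$ for all large $x$. Combining with the inequalities $p_i(x) \leq p(x)$ and $q(x) \leq q_j(x)$ (strict in the cases where $p_i \neq p$ or $q_j \neq q$, and automatic when equal), we obtain a single $K > 0$ such that $p_i(x) < h(x) < q_j(x)$ for all $i,j \in \{1,2\}$ and all $x \geq K$. There is no real obstacle in this lemma; the only thing to be careful about is the strictness of the inequalities when a polynomial equals the chosen dominator, which is handled by the fact that polynomials that are equal somewhere asymptotically on an infinite set must be identical.
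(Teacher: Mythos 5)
Your proof is correct, but the construction of $h$ is genuinely different from the one in the paper. Both arguments begin the same way: the strict inequalities along a sequence tending to infinity force each difference $q_j-p_i$ to be a nonzero polynomial with positive leading coefficient (or a positive constant), hence eventually positive. From there the paper encodes each polynomial by its coefficient vector in $\mathbb Q^{N+1}$, observes that eventual dominance is exactly the strict lexicographic order on these vectors, and picks $h$ by interpolating in that total order (with a small perturbation of the constant term to avoid coincidences). You instead use that $p_2-p_1$ and $q_2-q_1$ have eventually constant sign to select an eventual maximum $p$ of the lower family and an eventual minimum $q$ of the upper family, and then take $h=\tfrac12(p+q)$; the chain $p_i\leq p<h<q\leq q_j$ for large $x$ immediately gives the strict inequalities, so your worry about strictness when $p_i=p$ or $q_j=q$ is actually moot. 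Your route is more elementary and avoids the lexicographic bookkeeping and the perturbation step entirely; the paper's route makes the underlying total order on rational polynomials under eventual dominance explicit, which is conceptually aligned with how the lemma is used (to interpolate in the ordered group $\mathbb Q[e^{-\pi},1-e^{-\pi}]$), but nothing in the application requires it. Both arguments extend verbatim to finitely many polynomials on each side.
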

\begin{proof} Since polynomials only have finitely many zeros there is a $K' > 0$ such that $p_i(x) < q_j(x)$ for all $i,j$ and all $x \geq K'$. Write $q_j(x) = a_{0,j} + a_{1,j}x + a_{2,j}x^2 + \cdots + a_{N,j}x^N$ and $p_i(x) = b_{0,j} + b_{1,j}x + b_{2,j}x^2 + \cdots + b_{N,j}x^N$ for some $N$ larger than the degree of any of the four given polynomials, and set
$$
\xi_i = (b_{N,i}, b_{N-1,i}, \cdots , b_{0,i}) \in \mathbb Q^{N+1}
$$
and
$$
\eta_j = (a_{N,j}, a_{N-1,j}, \cdots , a_{0,j}) \in \mathbb Q^{N+1} .
$$
Since $p_i(x) < q_j(x)$ for all large $x$, we have that $\xi_i <_{lex} \eta_j$ for all $i,j$ with respect to the lexicographic order $<_{lex}$. Since $\mathbb Q^{N+1}$ is totally ordered in the lexicographic order there is an element $ c = (c_N,c_{N-1}, c_{N-2}, \cdots , c_0) \in \mathbb Q^{N+1}$ such that $\xi_i <_{lex} c <_{lex} \eta_j$ for all $i,j$. By changing $c_0$ by a small amount we can arrange that $c \notin \{\xi_1,\xi_2,\eta_1,\eta_2\}$. Then the polynomial
$$
h(x) = c_0 + c_1x + c_2 x^2 + \cdots + c_Nx^N
$$
will have the desired property.
\end{proof}

Consider the subset $\Gamma$ of $\left( \bigoplus_\mathbb Z H\right) \oplus G$ consisting of the elements $(\xi, g)  \in \left( \bigoplus_\mathbb Z H\right) \oplus G$ with the property that there is an $\epsilon > 0$ such that
\begin{equation}\label{06-09-21d}
\hat{L}(\xi)(x) = g(x) \ \ \forall x \in \pi^{-1}(]-\epsilon,\epsilon[) \ .
\end{equation}
$\Gamma$ is a subgroup of $\left(\bigoplus_\mathbb Z H\right) \oplus G$.

\begin{lemma}\label{05-09-21} The projection $\Gamma \to G$ is surjective.
\end{lemma}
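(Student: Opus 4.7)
The plan is to unpack the definition of $G$ and read off a suitable $\xi$ by hand. Given $g \in G$, by definition there is some $k \in \mathbb{N}$ such that $g \in G_k + G_{00}$, so we may write
\[
g \;=\; h^{-}\,\psi^{-}_{k}\!\circ\pi \;+\; \hat{L}(\xi)\,\psi^{0}_{k}\!\circ\pi \;+\; h^{+}\,\psi^{+}_{k}\!\circ\pi \;+\; g_{0},
\]
where $h^{\pm} \in \mathbb{Q}[e^{-\pi},1-e^{-\pi}]$, the element $\xi$ lies in $\bigoplus_{\mathbb{Z}} H$, and $g_{0} \in G_{00} \subseteq \mathcal{A}_{00}(S,\pi)$. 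I claim that this $\xi$ witnesses $(\xi,g) \in \Gamma$.

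To verify \eqref{06-09-21d}, I would produce $\epsilon > 0$ such that each summand other than $\hat L(\xi)$ vanishes on $\pi^{-1}(]-\epsilon,\epsilon[)$. First, by construction of the cutoffs, $\psi^{0}_{k}(t)=1$ and $\psi^{\pm}_{k}(t)=0$ for $|t| \leq \tfrac{1}{2k}$, so on $\pi^{-1}(]-\tfrac{1}{2k},\tfrac{1}{2k}[)$ the first and third summands drop out and $\hat L(\xi)\,\psi^0_k\!\circ\pi$ coincides with $\hat L(\xi)$. Second, since $g_{0} \in \mathcal{A}_{00}(S,\pi)$ has compact support contained in the open set $S \setminus \pi^{-1}(0)$, and $\pi$ is continuous, the image $\pi(\operatorname{supp} g_{0})$ is a compact subset of $\mathbb{R} \setminus \{0\}$; hence it is bounded away from $0$, giving some $\delta > 0$ with $g_{0} \equiv 0$ on $\pi^{-1}(]-\delta,\delta[)$. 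Taking $\epsilon = \min\{\tfrac{1}{2k},\delta\}$ then yields $g(x) = \hat L(\xi)(x)$ for all $x \in \pi^{-1}(]-\epsilon,\epsilon[)$, so $(\xi,g) \in \Gamma$, as required.

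I do not expect a genuine obstacle here: the statement is essentially a bookkeeping exercise, and the definition of $G$ was evidently designed so that the middle summand $\hat L(\bigoplus_{\mathbb{Z}} H)\,\psi^{0}_{k}\!\circ\pi$ governs the behavior in a neighbourhood of $\pi^{-1}(0)$, while the $h^{\pm}$ factors are killed by the cutoffs $\psi^{\pm}_{k}\!\circ\pi$ and the $G_{00}$ correction is supported away from $\pi^{-1}(0)$. The only small point to keep an eye on is the passage from "$\operatorname{supp} g_{0} \subseteq S \setminus \pi^{-1}(0)$" to "$g_{0}$ vanishes on $\pi^{-1}(]-\delta,\delta[)$ for some $\delta>0$," which is where compactness of the support is used.
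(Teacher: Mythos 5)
Your proof is correct and is essentially the paper's argument: the paper's own proof is a one-line appeal to the definition of $G$ (namely that $g$ agrees with some $\hat{L}(\xi)$ on $\pi^{-1}(]-\tfrac{1}{2k},\tfrac{1}{2k}[)$), and you have simply spelled out the bookkeeping, including the correct observation that the compact support of the $G_{00}$-term, being disjoint from $\pi^{-1}(0)$, has image under $\pi$ bounded away from $0$.
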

\begin{proof} Let $g \in G$. By definition of $G$ there is an element $\xi  \in \bigoplus_{\mathbb Z}H$ and $k \in \mathbb N$ such that $g(x) = \hat{L}(\xi)$ on $\pi^{-1}(\left]-\frac{1}{2k},\frac{1}{2k}\right[$.
\end{proof}

Set
$$
\Gamma^+ = \left\{  (\xi, g)  \in \Gamma : \ g \in G^+ \backslash \{0\} \right\} \cup \{0\} \ .
$$
By combining Lemma \ref{05-09-21} and Lemma \ref{01-09-21} above with Lemma 3.1 and Lemma 3.2 in \cite{EHS} we conclude that $(\Gamma,\Gamma^+)$ is a dimension group.

 Given an element $h \in H$ we denote in the following by $h^{(0)}$ the element of $\bigoplus_{\mathbb Z} H$ defined by $(h^{(0)})_0 = h$ and $(h^{(0)})_n = 0$ when $n \neq 0$. Define $\Sigma : \bigoplus_\mathbb Z H \to H$ by
$$
\Sigma\left((h_n)_{n \in \mathbb Z}\right) = \sum_{n \in \mathbb Z} h_n \ .
$$

\begin{lemma}\label{01-09-21d} $(\Gamma,\Gamma^+)$ has large denominators; that is for all $x \in \Gamma^+$ and $m \in \mathbb N$ there is an element $y \in \Gamma^+$ and an $n \in \mathbb N$ such that $my \leq x \leq ny$. 
\end{lemma}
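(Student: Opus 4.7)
Fix $(\xi,g) \in \Gamma^+ \setminus \{0\}$ and $m \in \mathbb N$; the plan is to construct $(\eta,h) \in \Gamma^+$ and $n \in \mathbb N$ with $m(\eta,h) \leq (\xi,g) \leq n(\eta,h)$. Since $g > 0$ strictly on $S$, the restriction $g|_{\pi^{-1}(0)} = \theta(\Sigma(\xi))$ is strictly positive on the compact set $\pi^{-1}(0)$, so $\Sigma(\xi) \in H^+$. The density of $\theta(H)$ in $\Aff \pi^{-1}(0)$ lets me pick $v \in H$ with $\theta(v)$ close enough to $\theta(\Sigma(\xi))/(2(m+1))$ that $0 < \theta(v)$ and $(m+1)\theta(v) < \theta(\Sigma(\xi))$ strictly on $\pi^{-1}(0)$. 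Setting $\eta := v^{(0)}$ then yields $\hat L(\eta) = L(\theta(v))$, and fixes the candidate $\eta$.

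Using a decomposition $g = g^-\psi_{k_0}^-\circ\pi + \hat L(\xi_0)\psi_{k_0}^0\circ\pi + g^+\psi_{k_0}^+\circ\pi + g_{00}$ arising from $g \in G_{k_0} + G_{00}$, with $g^\pm \in \mathbb Q[e^{-\pi},1-e^{-\pi}]$ and $g_{00} \in G_{00}$, I would define the preliminary candidate
\[
h_0 := L(\theta(v))\,\psi_k^0\circ\pi + \frac{g^-}{m+2}\,\psi_k^-\circ\pi + \frac{g^+}{m+2}\,\psi_k^+\circ\pi \in G_k
\]
for $k$ so large that $\psi_k^0\circ\pi$ is supported in the region where $\psi_{k_0}^0\circ\pi = 1$. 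Then $h_0 = \hat L(\eta)$ on a neighborhood of $\pi^{-1}(0)$, and $h_0 = g/(m+2)$ on $\pi^{-1}(\{|t| \geq N\})$ for any $N$ beyond the support of $g_{00}$, so the strict inequalities $h_0 > 0$ and $(m+1)h_0 < g$ hold outside a compact subset $K \subseteq S\setminus\pi^{-1}(0)$, with an explicit asymptotic margin $g/(m+2)$ at infinity.

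To correct the failure on $K$, I would apply Lemma \ref{03-09-21a}(2) with a closed set $F$ of the form $[-\delta,\delta] \cup (\mathbb R \setminus (-N,N))$ and $h^F = 0$ (the required strict bounds on $S_F$ hold by the previous step), obtaining $\phi \in \mathcal A_{00}(S,\pi)$ with compact support inside $\pi^{-1}((-N,N)\setminus\{0\})$ satisfying $-h_0 < \phi < (g-(m+1)h_0)/(m+1)$ strictly on $S$. Then $h_0+\phi$ is strictly positive and satisfies $(m+1)(h_0+\phi) < g$, with a uniform positive margin on the compact set $\pi^{-1}([-N,N])$. Property \ref{07-09-21} furnishes $g_0 \in G_{00}$ with $\sup_S|g_0-\phi|$ less than this margin and $\supp g_0 \subseteq \pi^{-1}((-N,N)\setminus\{0\})$. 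Setting $h := h_0+g_0 \in G$, the strict inequalities $h > 0$ and $(m+1)h < g$ hold on all of $S$: on $\pi^{-1}([-N,N])$ by the margin argument, and off it (where $g_0$ vanishes) by the asymptotic margin of $h_0$. Because $\supp g_0$ is disjoint from the compact $\pi^{-1}(0)$, $g_0$ vanishes on an open neighborhood of $\pi^{-1}(0)$, so $h = \hat L(\eta)$ there, giving $(\eta,h) \in \Gamma^+$.

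Finally, $g-mh = (g-(m+1)h) + h$ is strictly positive on $S$, so $m(\eta,h) \leq (\xi,g)$; and $g/h$ is continuous and bounded on $S$ (equal to $m+2$ on $\pi^{-1}(\{|t|\geq N\})$ and bounded on the compact middle), so any integer $n > \sup_S g/h$ yields $nh > g$ strictly, giving $(\xi,g) \leq n(\eta,h)$. The main obstacle is preserving both strict inequalities globally on the non-compact $S$ under the sup-norm approximation $g_0$ of $\phi$; it is resolved by splitting $S$ into the compact $\pi^{-1}([-N,N])$ (handled by uniform margin on a compact set) and its complement, where $g_0$ vanishes identically and $h_0$ already provides the explicit asymptotic margin $g/(m+2)$.
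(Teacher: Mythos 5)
Your argument is correct and follows essentially the same route as the paper's proof: decompose $g$ into its behaviour near $\pi^{-1}(0)$, at infinity, and on a compact middle region, build the candidate $h$ from $L(\theta(v))\psi^0_k\circ\pi$ plus rational multiples of the asymptotic parts, and repair the compact middle using Lemma \ref{03-09-21a}(2) together with Property \ref{07-09-21}. The only (harmless) variations are that you choose the $H$-component by density of $\theta(H)$ rather than by the large-denominator property of $H$, and you obtain $n$ at the end from boundedness of $g/h$ rather than fixing it at the outset.
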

\begin{proof} 

Let $ x=(\xi,g) \in \Gamma^+\backslash \{0\}$ and $m \in \mathbb N$ be given. Then $\Sigma(\xi) \in H^+\backslash \{0\}$ and since $H$ has large denominators by \cite{Ni}, there is an element $b\in H^+$ such that $ mb< \Sigma(\xi)  < n b$ for some $n \in \mathbb N, \ n > m+2$. Since $L(\theta(\Sigma(\xi)))$ agrees with $g$ on $\pi^{-1}(0)$, there is a compact neighborhood $U$ of $0$ such that
$$
mL(\theta(b))(x) < g(x) < n L(\theta(b))(x)
$$
for all $x \in \pi^{-1}(U)$. There is also a $K \in \mathbb N$ such that $U \subseteq \ ]-K,K[$ and functions $f^\pm \in \mathbb Q[e^{-\pi}, 1-e^{-\pi}]$ such that $g(x) = f^-(x), \ x \leq -K$, and $g(x) = f^+(x),\ x \geq K$. It follows from (2) of Lemma \ref{03-09-21a} that there is an element $a \in \mathcal A(S,\pi)$ such that 
\begin{itemize}
\item[{}] $\frac{1}{n} g(x) < a(x) < \frac{1}{m} g(x) \ \ \forall x \in S$,
\item[{}] $a(x) = L(\theta(b))(x)$ for all $x \in \pi^{-1}(U)$,
\item[{}]  $a(x) = \frac{1}{ m+1}f^-(x)$ for $x \leq -K$, and
\item[{}] $a(x) = \frac{1}{m+1}f^+(x)$ for all $x \geq K$.
\end{itemize}
Choose $k \in \mathbb N$ so large that $[-\frac{1}{k},\frac{1}{k}] \subseteq U$ and note that 
$$
a(x) = L(\theta(b))(x)\psi^0_k\circ \pi(x) + a(x)\psi^+_k\circ\pi(x)  +a(x)\psi^-_k\circ \pi(x)  .
$$
Then the function
$$
a' := a - L(\theta(b))\psi^0_k\circ \pi - \frac{1}{m+1}f^+\psi^+_k\circ \pi - \frac{1}{m+1}f^-\psi^-_k\circ \pi
$$
is supported in 
$]-K,K[ \backslash \{0\}$. Let $\delta > 0$ be smaller than $\frac{1}{m}g(x) - a(x)$ and $a(x) - \frac{1}{n}g(x)$ for all $x \in \pi^{-1}([-K,K])$. By Property \ref{07-09-21} we can find $c \in G_{00}$ such that $\supp c \subseteq ]-K,K[ \backslash \{0\}$ and $\left|c(x) -a'(x)\right| < \delta$ for all $x \in S$. Then
$$
g' := c  +L(\theta(b))\psi^0_k\circ \pi + \frac{1}{m+1}f^+\psi^+_k\circ \pi + \frac{1}{m+1}f^-\psi^-_k\circ \pi \ \in \ G
$$
and $mg'(x) < g(x) < ng'(x)$ for all $x \in S$. It follows that $y = (b^{(0)},g) \in \Gamma^+$ has the desired property.
\end{proof}

Since $\hat{L} \circ \sigma = \alpha_0 \circ \hat{L}$ we can define $\alpha \in \Aut \Gamma$ by
$$
\alpha = \sigma \oplus \alpha_0  .
$$

\begin{lemma}\label{04-08-21x} The only order ideals $I$ in $\Gamma$ such that $\alpha(I) = I$ are $I = \{0\}$ and $I = \Gamma$.
\end{lemma}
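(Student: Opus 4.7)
I would show that any nonzero $\alpha$-invariant order ideal $I \subseteq \Gamma$ must equal all of $\Gamma$. Pick $(\xi_0,g_0) \in I \cap \Gamma^+ \setminus \{0\}$; by definition of $\Gamma^+$, the $G$-component $g_0$ lies in $G^+\setminus\{0\}$, so $g_0(x)>0$ for every $x \in S$. Since $\alpha(I)=I$ and $I$ is a subgroup, every finite integer combination $\sum_n c_n \alpha^n(\xi_0,g_0)$ belongs to $I$, and its $G$-component is $\bigl(\sum_n c_n e^{-n\pi}\bigr)g_0$.

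Given any $(\eta,h) \in \Gamma^+$ (the case $(\eta,h)=0$ being trivial), I claim it suffices to produce $(\xi_1,g_1) \in I$ with $g_1(x) > h(x)$ strictly for all $x \in S$. Indeed, then $g_1 - h \in G^+\setminus\{0\}$, and since $(\xi_1,g_1),(\eta,h)\in\Gamma$ one has $(\xi_1,g_1)-(\eta,h)\in\Gamma^+$; thus $(\eta,h) \leq (\xi_1,g_1)$ in $\Gamma$, and the ideal property forces $(\eta,h) \in I$. This would show $\Gamma^+ \subseteq I$, and since $(\Gamma,\Gamma^+)$ is a dimension group we have $\Gamma = \Gamma^+ - \Gamma^+$, giving $I = \Gamma$.

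The construction of $(\xi_1,g_1)$ reduces to a growth estimate: I would establish the existence of integers $C, N > 0$ such that
$$ h(x) \ < \ C\,g_0(x)\bigl(e^{N\pi(x)} + 1 + e^{-N\pi(x)}\bigr) \qquad \forall x \in S, $$
and then take
$$ (\xi_1,g_1) \ := \ C\bigl(\alpha^{-N}(\xi_0,g_0) + (\xi_0,g_0) + \alpha^N(\xi_0,g_0)\bigr) \ \in \ I, $$
whose $G$-component is $C g_0(e^{N\pi} + 1 + e^{-N\pi}) > h$ on $S$ by construction.

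The main technical obstacle is the growth estimate itself. It rests on the description of $G = \bigcup_k(G_k + G_{00})$: outside a sufficiently large compact $\pi$-interval, both $g_0$ and $h$ agree with elements of $\mathbb Q[e^{-\pi},1-e^{-\pi}]$, since the $G_{00}$-summand is compactly supported off $\pi^{-1}(0)$ and $\psi_k^{\pm}\circ\pi$ is identically $1$ for $|\pi|\geq 1/k$. These asymptotic expressions are Laurent polynomials in $e^{-\pi}$, and the positivity of $g_0$ on $S$ forces the leading coefficient of each relevant asymptotic expansion of $g_0$ to be strictly positive, so $h/g_0$ has at most exponential growth of integer rate in $|\pi|$ as $|\pi|\to\infty$ on $S$. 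On the complementary region — compact by properness of $\pi$ — the ratio $h/g_0$ is bounded by continuity, since $g_0$ is positive there. Combining these two bounds yields admissible $C$ and $N$, completing the argument.
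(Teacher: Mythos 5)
Your proposal is correct and follows essentially the same route as the paper: fix a nonzero positive element of $I$, use $\alpha$-invariance to form positive integer combinations of its $\alpha$-translates, dominate an arbitrary element of $\Gamma^+$ by such a combination via the exponential asymptotics of elements of $G$, and conclude with the hereditary property and $\Gamma=\Gamma^+-\Gamma^+$. The only cosmetic difference is that the paper first replaces the chosen element $(\xi,g)$ by a dominated element $(h^{(0)},g')$ whose asymptotic parts are pure exponentials, which makes the translate estimate explicit, whereas you extract the same growth bound directly from the general form of elements of $\mathbb Q[e^{-\pi},1-e^{-\pi}]$.
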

\begin{proof} Recall that an order ideal $I$ in $\Gamma$ is a subgroup such that
 \begin{itemize}
\item[(a)] $I= I \cap \Gamma^+ -  I \cap \Gamma^+$, and
\item[(b)] when $ 0 \leq y \leq x$ in $\Gamma$ and $x \in I$, then $y \in I$.
\end{itemize}
Let $I$ be a non-zero order ideal such that $\alpha(I) = I$. Since $I \cap \Gamma^+ \neq \{0\}$ there is an element $g \in G^+ \backslash \{0\}$ and an element $\xi \in \bigoplus_{n \in \mathbb Z}H$ such that $(\xi,g) \in I$. Set $h = \Sigma(\xi)$. By definition of $G^+ \backslash \{0\}$ there are natural numbers $n, m, k, K \in \mathbb N$ such that the function
$$
g' = L(\theta(h))\psi^0_k \circ \pi + e^{n\pi}\psi^-_k\circ \pi  +  e^{-m\pi}\psi^+_k\circ \pi
$$
has the property that
$$
0 < g'(x) < K g(x) \  \ \ \forall x \in S   .
$$ 
It follows that $({h}^{(0)}, g') \in I \cap \Gamma^+$. Note that
$$
\alpha_0^l(g') =  e^{-l\pi} L(\theta(h))\psi^0_k \circ \pi +  e^{(n-l)\pi}\psi^-_k\circ \pi +  e^{-(m+l)\pi}\psi^+_k\circ \pi
$$
for all $l \in \mathbb Z$. Consider an arbitrary element $(\xi',f) \in \Gamma^+ \backslash \{0\}$. We can then find $l_1,l_2 \in \mathbb Z$ and $ M \in \mathbb N$ such that
$$
f(x) < M\left( \alpha_0^{l_1}(g')(x) + \alpha_0^{l_2}(g')(x)\right) \ \ \forall x \in S  .
$$
Since 
$$
\alpha^{l_1}((h^{(0)},g')) +  \alpha^{l_2}((h^{(0)},g')) \in I \cap \Gamma^+  ,
$$ 
it follows that $(\xi',f) \in I$ and we conclude therefore that $I = \Gamma$.
\end{proof}

\subsection{Some homomorphisms $\Gamma \to \mathbb R$}
Note that the constant function $1$ is in $G$ and that, with $v := (u^{(0)},1)$, $v \in \Gamma^+$.
Let $\beta \in \mathbb R$ and $\omega \in \pi^{-1}(\beta)$. As in Remark \ref{07-09-21f} we denote in the sequel by $\mathcal A_\mathbb R(S,\pi)$ the real Banach space consisting of the elements of $\mathcal A(S,\pi)$ that have a limit at infinity.

\begin{lemma}\label{01-09-21e} Let $f \in \mathcal A_\mathbb R(S,\pi)$ and let $\epsilon > 0$ be given. There is an element $g\in G$ such that $\sup_{x \in S} |f(x)- g(x)| \leq \epsilon $.
\end{lemma}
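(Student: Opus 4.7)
The plan is to decompose $f$ via the partition of unity $\{\psi_k^-, \psi_k^0, \psi_k^+\} \circ \pi$ used in the definition of $G_k$: approximate the tail behaviour by a rational constant (which lies in $\mathbb Q[e^{-\pi},1-e^{-\pi}]$), approximate the behaviour near $\pi^{-1}(0)$ by an element of $\hat L(\bigoplus_\mathbb Z H)$, and absorb the remaining error, which will turn out to be compactly supported away from $\pi^{-1}(0)$, into $G_{00}$ using Property~\ref{07-09-21}.

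First I would exploit the existence of $c := \lim_{x \to \infty} f(x)$ (which is given by the hypothesis $f \in \mathcal A_\mathbb R(S,\pi)$) to reduce the tail analysis: choose $r \in \mathbb Q$ with $|c-r| < \epsilon/8$ and, using properness of $\pi$, $N > 0$ such that $|f(x) - r| < \epsilon/4$ whenever $|\pi(x)| \geq N$. Density of $\theta(H)$ in $\Aff \pi^{-1}(0)$ yields $h \in H$ with $\|\theta(h) - f|_{\pi^{-1}(0)}\|_\infty < \epsilon/8$, and then $L(\theta(h)) \in \mathcal A(S,\pi)$ is a fibre-affine extension of $\theta(h)$. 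A standard compactness argument (continuity of $f - L(\theta(h))$ on the compact set $\pi^{-1}([-1,1])$ together with the $\epsilon/8$ bound on $\pi^{-1}(0)$) then produces $\delta \in (0,1)$ with $|f - L(\theta(h))| < \epsilon/4$ on $\pi^{-1}([-\delta,\delta])$. Taking $k$ so large that $1/k < \min(\delta, N)$ and setting
\[
g_0 := r\,\psi_k^- \circ \pi + L(\theta(h))\,\psi_k^0 \circ \pi + r\,\psi_k^+ \circ \pi \in G_k ,
\]
a short region-by-region computation shows $|f - g_0| < \epsilon/4$ on $\{|\pi| \leq 1/(2k)\}$ (where $g_0 = L(\theta(h))$ since $\psi_k^0 = 1$) and on $\{|\pi| \geq N\}$ (where $g_0 = r$ since $\psi_k^0 = 0$ and $\psi_k^- + \psi_k^+ = 1$).

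The error $E := f - g_0$ can therefore only be large on the compact annulus $A := \pi^{-1}([-N, -1/(2k)] \cup [1/(2k), N])$, which is disjoint from $\pi^{-1}(0)$. Choosing a continuous $\chi : \mathbb R \to [0,1]$ equal to $1$ on $[-N, -1/(2k)] \cup [1/(2k), N]$ and vanishing on $[-1/(4k),1/(4k)]$ and on $\{|t| \geq N+1\}$, the product $(\chi \circ \pi)E$ lies in $\mathcal A_{00}(S,\pi)$, is supported in $\pi^{-1}(\,]-(N+2), N+2[\, \setminus \{0\})$, and differs from $E$ in sup norm by at most $\epsilon/4$ (on the set $\{\chi \neq 1\}$ the bound on $E$ already established above applies). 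Property~\ref{07-09-21} then provides $g_{00} \in G_{00}$ with $\|(\chi \circ \pi)E - g_{00}\|_\infty < \epsilon/4$, and $g := g_0 + g_{00} \in G_k + G_{00} \subseteq G$ satisfies $\|f - g\|_\infty \leq \epsilon/2 < \epsilon$.

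The only non-routine step I anticipate is extracting $\delta$ via compactness and then tuning $\chi$ so that $(\chi \circ \pi)E$ genuinely lives in $\mathcal A_{00}(S,\pi)$---vanishing near $\pi^{-1}(0)$ and compactly supported via properness of $\pi$---while still remaining within $\epsilon/4$ of $E$ in sup norm; everything else is bookkeeping on the regions carved out by the partition of unity and the cutoff.
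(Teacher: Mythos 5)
Your proof is correct and follows essentially the same strategy as the paper's: a rational constant handles the two tails, $L(\theta(h))$ (via density of $\theta(H)$ in $\Aff\pi^{-1}(0)$) handles a neighbourhood of $\pi^{-1}(0)$, these are glued with the partition of unity $\psi_k^{\pm},\psi_k^0$, and the remaining error, supported in a compact set away from $\pi^{-1}(0)$, is absorbed into $G_{00}$ by Property \ref{07-09-21}. The only cosmetic difference is that the paper first replaces $f$ by $f_1+r$ with $f_1$ compactly supported and then approximates, whereas you cut off the error term at the end; the estimates and ingredients are the same.
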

\begin{proof} An initial approximation gives us an element $f_1 \in \mathcal A(S,\pi)$ which is compactly supported and a real number $r \in \mathbb R$ such that 
$$
\sup_{x \in S} |f(x)- f_1(x) - r| \leq \frac{\epsilon}{2} \ .
$$
Let $q \in \mathbb Q$ such that $|q-r| < \frac{\epsilon}{6}$ and choose an element $h \in H$ such that $|\theta(h)(y) - f_1(y) -r| < \frac{\epsilon}{6}$ for all $y \in \pi^{-1}(0)$. There is a $k \in \mathbb N$ such that $\left|L(\theta(h))(x) -f_1(x) -r\right| < \frac{\epsilon}{6}$ for all $x \in \pi^{-1}([-\frac{1}{k},\frac{1}{k}])$. Since $f_1\psi^+_k\circ \pi + f_1\psi^-_k\circ \pi$ is compactly supported in $S \backslash \pi^{-1}(0)$ it follows from Property \ref{07-09-21} that there is an element $g' \in G_{00}$ such that
$$
\sup_{x \in S}\left| g'(x) - f_1(x)\psi^+_k\circ \pi(x) - f_1(x)\psi^-_k\circ \pi(x)\right| \leq \frac{\epsilon}{6} \ .
$$
Then 
$$
g = L(\theta(h))\psi^0_k \circ \pi + q\psi^+_k\circ \pi  + q\psi^-_k\circ \pi  + g' \in G
$$
is an element with the desired property.
\end{proof}

Let $\beta \in \mathbb R$. For each $\omega \in \pi^{-1}(\beta)$,  define $\omega_\beta : \Gamma \to \mathbb R$ by
$$
\omega_\beta(\xi,g) = g(\omega) \ .
$$ 
Then $\omega_\beta(\Gamma^+) \subseteq [0,\infty)$, $\omega_\beta(v) = 1$, and $\omega_\beta \circ \alpha = e^{-\beta} \omega_\beta$.

\begin{lemma}\label{27-08-21x}
Let $\phi : \Gamma \to \mathbb R$ be a positive homomorphism with the properties that $\phi(v) =1$ and $\phi \circ \alpha = s \phi$ for some $s > 0$. Set $\beta = -\log s$. There is an element $\omega \in \pi^{-1}(\beta)$ such that $\phi = \omega_\beta$.
\end{lemma}
\begin{proof} The projection $p : \Gamma \to G$ is surjective by Lemma \ref{05-09-21}. Assume that $(\xi,g) \in \Gamma$ and $p(\xi,g) = g = 0$. Since $(\Gamma,\Gamma^+)$ has large denominators by Lemma \ref{01-09-21d} there is for each $n \in \mathbb N$ an element $(\xi_n,g_n) \in \Gamma^+$ and a natural number $k_n$ such that $n(\xi_n,g_n) \leq v \leq k_n (\xi_n,g_n)$. Then $\pm (\xi,g) \leq (\xi_n,g_n)$ in $\Gamma$ and hence $\pm \phi(\xi,g) \leq \phi(\xi_n,g_n) \leq \frac{1}{n}$. It follows that $\phi(\xi,g) = 0$ and we conclude that there is a homomorphism $\phi' : G \to \mathbb R$ such that $\phi' \circ p = \phi$. Let $g \in G$ and assume that $g(x) \geq 0$ for all $x \in S$, and let $n \in \mathbb N$. There is $h_n \in H^+$ such that $0 < \theta(h_n) < \frac{1}{n}$, and then also a natural number $k \in \mathbb N$ such that $0 <L(\theta(h_n))(x)\psi^0_k \circ \pi(x) + \frac{1}{2n}\psi^+_k(\pi(x)) + \frac{1}{2n}\psi^-_k(\pi(x)) <\frac{1}{n}$ for all $x \in S$. Then 
$$
g'_n := L(\theta(h_n))\psi^0_k\circ \pi + \frac{1}{2n}\psi^-_k\circ \pi + \frac{1}{2n}\psi^+_k\circ \pi\in G^+
$$ 
and $0 \leq n(h_n^{(0)},g_n')\leq v$ in $\Gamma$. Hence $0 \leq \phi'(g_n') \leq \frac{1}{n}$. Let $\xi \in \bigoplus_{n\in \mathbb Z} H$ be an element such that $(\xi,g) \in \Gamma$. Then $(h_n^{(0)} +\xi,g_n'+g) \in \Gamma^+$ and hence $0 \leq \phi'(g_n'+g) \leq \phi'(g) + \frac{1}{n}$. Letting $n$ tend to infinity we find that $\phi'(g) \geq 0$, proving that $\phi'$ is positive on $G$. Let $g \in G$ and $n,m \in \mathbb N$ satisfy $|g(x)| < \frac{n}{m}$ for all $x \in S$. Then $-n <mg(x) < n$ for all $x \in S$ and since $\phi'(1) = 1$ this leads to the conclusion that $|\phi'(g)| \leq \frac{n}{m}$. Combined with Lemma \ref{01-09-21e} it follows from the last estimate that $\phi'$ extends by continuity to a linear map $\phi' : \mathcal A_\mathbb R(S,\pi)  \to \mathbb R$ such that $\left|\phi'(f)\right| \leq \sup_{x \in S} |f(x)|$. Using a Hahn-Banach theorem we extend $\phi'$ in a norm-preserving way to the space of all continuous real-valued functions on $S$ with a limit at infinity. Since $\phi'(1) = 1$ the extension is positive. It follows that there is a bounded Borel measure $m$ on $S$ such that
$$
\phi'(f) = \int_S f(x) \ \mathrm{d} m
$$
for all $f \in \mathcal A_{0}(S,\pi)$, where $\mathcal A_0(S,\pi)$ denotes the space of elements in $\mathcal A_\mathbb R(S,\pi)$ that vanish at infinity. 
Let $C_c(\mathbb R)$ denote the set of continuous real-valued compactly supported functions on $\mathbb R$ and note that $C_c(\mathbb R)$ is mapped into $\mathcal A_0(S,\pi)$ by the formula $F \mapsto F \circ \pi$. Since $\phi \circ \alpha = s\phi$ by assumption it follows that the measure $m \circ \pi^{-1}$ on $\mathbb R$ satisfies 
$$
 \int_\mathbb R e^{-t}F(t) \ \mathrm{d}m \circ \pi^{-1}(t)  =  s\int_\mathbb R F(t) \ \mathrm{d}m \circ \pi^{-1}(t)  \ \ \forall F \in C_c(\mathbb R)  .
 $$
It follows that $m \circ \pi^{-1}$ is concentrated at the point $\beta = -\log s$ and hence that $m$ is concentrated on $\pi^{-1}(\beta)$. We can therefore define a linear functional $\phi'' : \Aff \pi^{-1}(\beta) \to \mathbb R$ by
$$
\phi''(f) = \phi'(\hat{f}) = \int_S \hat{f}(x) \ \mathrm{d} m(x) \ ,
$$
where $\hat{f} \in \mathcal A_0(S,\pi)$ is any element with $\hat{f}|_{\pi^{-1}(\beta)} = f$, which exists by (1) in Lemma \ref{03-09-21a}. If $f \geq 0$ it follows from (2) of Lemma \ref{03-09-21a} that $\hat{f}$ can be chosen such that $\hat{f} \geq - \epsilon$ for any $\epsilon > 0$ and we see therefore that $\phi''$ is a positive linear functional. Since every state of $\Aff \pi^{-1}(\beta)$ is given by evaluation at a point in $\pi^{-1}(\beta)$ it follows in this way that there is an $\omega \in \pi^{-1}(\beta)$ and a number $\lambda \geq 0$ such that 
\begin{equation}\label{01-09-21h}
\phi'(g) = \lambda g(\omega)
\end{equation}
for all $g  \in \mathcal A_0(S,\pi)$. In particular, this conclusion holds for all $g \in G \cap \mathcal A_0(S,\pi)$. A general element $f \in G$ can be write as a sum
$$
f = f_- + f_0 + f_+ ,
$$
where $f_\pm, f_0 \in G$, $f_0$ has compact support and there are natural numbers $n_{\pm} \in \mathbb N$ such that $e^{n_-\pi}f_- \in \mathcal A_0(S,\pi)$ and $e^{-n_+ \pi}f_+ \in \mathcal A_0(S,\pi)$. Then $\phi'(f_0) = \lambda f_0(\omega)$,
\begin{align*}
&\phi'(f_-) = \phi'(\alpha^{n_-}(e^{n_-\pi}f_-)) = s^{n_-}\phi'(e^{n_-\pi}f_-) \\
&=  s^{n_-}\lambda e^{n_-\pi(\omega)}f_-(\omega) = \lambda f_-(\omega),
\end{align*}
and similarly, $\phi'(f_+) = \lambda f_+(\omega)$.
It follows that $\phi(f) = \lambda f(\omega)$. Inserting $f=1$ we find that $\lambda = 1$ and the proof is complete.
\end{proof}

\subsection{Application of the Pimsner-Voiculescu exact sequence}

Let $B$ be a stable AF algebra with $(K_0(B),K_0(B)^+) = (\Gamma, \Gamma^+)$ and let $\gamma$ be an automorphism of $B$ such that $\gamma_* = \alpha$; see \cite{E1}. 

\begin{poem}\label{07-09-21c} By Lemma \ref{08-08-21a} we can arrange that $\gamma$ has the following additional properties:
\begin{itemize}
\item[(A)] The restriction map 
$\mu \ \mapsto \ \mu|_B$
is a bijection from traces $\mu $ on $B \rtimes_{\gamma} \mathbb Z$ onto the $\gamma$-invariant traces on $B$, and
\item[(B)] $B \rtimes_{\gamma} \mathbb Z$ is $\mathcal Z$-stable; that is $(B \rtimes_{\gamma} \mathbb Z)\otimes \mathcal Z \simeq B \rtimes_{\gamma} \mathbb Z$ where $\mathcal Z$ denotes the Jiang-Su algebra, \cite{JS}.
\end{itemize} 
\end{poem}
Set
$$
C = B\rtimes_\gamma \mathbb Z  \ .
$$
It follows from Lemma \ref{04-08-21x} and \cite{E1} that $B$ is $\gamma$-simple and hence from \cite{E2} (see also\cite{Ki1}) that $C$ is simple. 
It follows from the Pimsner-Voiculescu exact sequence, \cite{PV}, that we can identify $K_0(C)$, as a group, with the quotient
$$
\Gamma/(\id - \alpha)(\Gamma)  ,
$$
in such a way that the map $\iota_* : K_0(B) \to K_0(C)$ induced by the inclusion $\iota : B \to C$ becomes the quotient map
$$
q : \Gamma \to  \Gamma/(\id - \alpha)(\Gamma) .
$$
Define $S_0 : \Gamma \to H$ such that 
$$
S_0(\xi, g) = \Sigma(\xi ) .
$$

\begin{lemma}\label{01-09-21k} $\ker S_0 = (\id- \alpha)(\Gamma)$.
\end{lemma}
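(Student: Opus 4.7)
My plan is to verify both inclusions of the equality directly. The first, $(\id - \alpha)(\Gamma) \subseteq \ker S_0$, is immediate: for any finitely supported $\xi = (h_n) \in \bigoplus_\mathbb Z H$, the entries of $\xi - \sigma\xi$ are $h_n - h_{n+1}$, whose sum telescopes to zero, so $S_0 \circ (\id - \alpha) = 0$.

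For the converse, given $(\xi, g) \in \Gamma$ with $\Sigma\xi = 0$, I will construct a preimage under $\id - \alpha$ in two stages. First I will produce $f \in G$ with $(1-e^{-\pi})f = g$ together with a lift $\mu$ of some $\eta' \in \bigoplus_\mathbb Z H$ such that $(\mu, f) \in \Gamma$. Writing $g = g_1 + g_2$ with $g_1 = h^-\psi^-_k \circ \pi + \hat L(\eta')\psi^0_k \circ \pi + h^+\psi^+_k \circ \pi \in G_k$ and $g_2 \in G_{00}$, I will take $k$ large enough that $\supp g_2$ is disjoint from $\pi^{-1}([-1/k, 1/k])$ and this interval sits inside the neighborhood where $g = \hat L(\xi)$ (using the nesting $G_k + G_{00} \subseteq G_{k+1} + G_{00}$). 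Then $\hat L(\eta') = \hat L(\xi)$ on $\pi^{-1}(]-1/(2k), 1/(2k)[)$, and evaluating at $\pi^{-1}(0)$ gives $\theta(\Sigma\eta') = 0$. Replacing $\eta'$ by $\eta' - (\Sigma\eta')^{(0)}$, which leaves $\hat L(\eta')$ unchanged because $L(\theta(\Sigma\eta')) = 0$, I may assume $\Sigma\eta' = 0$, and a telescoping lift then gives $\mu \in \bigoplus_\mathbb Z H$ with $(\id - \sigma)\mu = \eta'$ and so $(1-e^{-\pi})\hat L(\mu) = \hat L(\eta')$. Using that $(1-e^{-\pi})^{-1} \in \mathbb Q[e^{-\pi}, 1-e^{-\pi}]$ and that $(\id - \alpha_0)(G_{00}) = G_{00}$ produces $f_0 \in G_{00}$ with $(1-e^{-\pi})f_0 = g_2$, I will set
\begin{equation*}
f := \frac{h^-}{1-e^{-\pi}}\psi^-_k \circ \pi + \hat L(\mu)\psi^0_k \circ \pi + \frac{h^+}{1-e^{-\pi}}\psi^+_k \circ \pi + f_0 \in G_k + G_{00} \subseteq G,
\end{equation*}
and verify directly that $(1-e^{-\pi})f = g$, that $f = \hat L(\mu)$ on $\pi^{-1}(]-1/(2k), 1/(2k)[)$, and hence that $(\mu, f) \in \Gamma$ with $(\id - \alpha)(\mu, f) = (\eta', g)$.

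In the second stage I will correct the mismatch $\eta' \neq \xi$. Since $\Sigma(\xi - \eta') = 0$, telescoping yields $\zeta \in \bigoplus_\mathbb Z H$ with $(\id - \sigma)\zeta = \xi - \eta'$; near $\pi^{-1}(0)$ the identities $\hat L(\xi) = g = \hat L(\eta')$ give $\hat L(\xi - \eta') = 0$, and the relation $(1-e^{-\pi})\hat L(\zeta) = \hat L(\xi - \eta')$ forces $\hat L(\zeta) = 0$ off $\pi^{-1}(0)$ in that neighborhood, hence on all of it by continuity. Therefore $(\zeta, 0) \in \Gamma$, so $(\mu + \zeta, f) \in \Gamma$ with $(\id - \alpha)(\mu + \zeta, f) = (\xi, g)$, completing the argument. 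The main obstacle I expect is the coupling of the two constraints on $f$, namely membership in $G$ and matching $\hat L$ of the prescribed lift of $\xi$ near the zero fiber; the resolution is exactly the two-stage construction above, which produces first $(\eta', g)$ from the natural $G_k$-decomposition of $g$ and then adjusts to $(\xi, g)$ by exploiting the vanishing of $\hat L(\zeta)$ near $\pi^{-1}(0)$.
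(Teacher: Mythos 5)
Your first stage is sound, and it in fact tracks the paper's argument quite closely: the inverse of $\id-\alpha_0$ is applied term by term to the $G_k+G_{00}$ decomposition of $g$ (using that $\mathbb Q[e^{-\pi},1-e^{-\pi}]$ absorbs division by $1-e^{-\pi}$ and that $(\id-\alpha_0)(G_{00})=G_{00}$), while the $\bigoplus_{\mathbb Z}H$ part is lifted through $\id-\sigma$. You are even more careful than the printed proof about the fact that the middle term of the decomposition involves some $\eta'$ rather than $\xi$ itself. The gap is in stage 2. The step ``$\hat{L}(\zeta)=0$ off $\pi^{-1}(0)$ in that neighborhood, hence on all of it by continuity'' is not valid: nothing in the definition of a proper simplex bundle forces points of $\pi^{-1}(0)$ to be limits of points of $S\setminus\pi^{-1}(0)$. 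The value $0$ may be isolated in $\pi(S)$ (even $\pi(S)=\{0\}$ is allowed), in which case the punctured neighborhood over which you take limits is empty. On $\pi^{-1}(0)$ one has $\hat{L}(\zeta)=\theta(\Sigma\zeta)$, and for the telescoping lift $\Sigma\zeta=\sum_n n\,(\xi-\eta')_n$, which $\theta$ need not annihilate even though $\theta(\Sigma(\xi-\eta'))=0$. Concretely, if $\pi\equiv 0$ and $\xi-\eta'=h^{(0)}-h^{(1)}$ with $\theta(h)\neq 0$, then $\zeta=-h^{(1)}$ and $\hat{L}(\zeta)|_{\pi^{-1}(0)}=-\theta(h)\neq 0$, so $(\zeta,0)\notin\Gamma$ and stage 2 as written breaks down.

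The repair is small and stays inside your framework: pair $\zeta$ not with $0$ but with $g''':=\hat{L}(\zeta)\,\psi^0_k\circ\pi\in G_k\subseteq G$ for $k$ large. Then $(\zeta,g''')\in\Gamma$ by construction, and
\begin{equation*}
(\id-\alpha)(\zeta,g''')=\bigl(\xi-\eta',\,\hat{L}(\xi-\eta')\,\psi^0_k\circ\pi\bigr)=(\xi-\eta',0)
\end{equation*}
as soon as $k$ is large enough that $\supp(\psi^0_k\circ\pi)$ lies inside the set where $\hat{L}(\xi-\eta')$ vanishes. Adding this to your $(\mu,f)$ then gives the required preimage of $(\xi,g)$. (The paper avoids the second correction altogether by lifting $\xi$ itself and inserting $\hat{L}(\xi')$ directly as the middle term of $g''$, so that only one element needs to be produced.)
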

\begin{proof} Since $ (\id - \alpha)(\Gamma) = (\id -\sigma) \oplus (\id - \alpha_0)$ and $\Sigma \circ (\id - \sigma) = 0$, we find that $ (\id - \alpha)(\Gamma) \subseteq \ker S_0$. Let $(\xi,g) \in \Gamma$, and assume that $S_0(\xi,g) =\Sigma(\xi) = 0$. By Lemma 4.6 of \cite{Th3} there is an element $\xi' \in \bigoplus_\mathbb Z H$ such that $(\id - \sigma)(\xi') = \xi$. By the definition of $\Gamma$ there is $\epsilon > 0$ such that $\hat{L}(\xi)$ and $g$ agree on $\pi^{-1}(]-\epsilon,\epsilon[)$, and so when $k \geq \epsilon^{-1}$ we have 
$$
g = \hat{L}(\xi)\psi^0_k\circ \pi + h^-\psi^-_k\circ \pi + h^+\psi^+_k\circ \pi + g_0
$$ 
for some $h^\pm \in \mathbb Q[e^{-\pi},1-e^{-\pi}]$ and some $g_0 \in G_{00}$. By the definition of $\mathbb Q[e^{-\pi},1-e^{-\pi}]$ there are elements $f^\pm \in \mathbb Q[e^{-\pi},1-e^{-\pi}]$ such that $h^\pm = (\id -\alpha_0)(f^\pm)$ and by \eqref{07-09-21e} there is an element $g' \in G_{00}$ such that $g_0 = (\id -\alpha_0)(g')$. Define
$$
g'' :=  \hat{L}(\xi')\psi^0_k\circ \pi + f^-\psi^-_k\circ \pi + f^+\psi^+_k\circ \pi + g' \ \in \ G .
$$
Since 
$$
 (\id - \alpha_0)(\hat{L}(\xi')\psi^0_k\circ \pi) = \hat{L}((\id - \sigma)(\xi'))\psi^0_k\circ \pi = \hat{L}(\xi)\psi^0_k\circ \pi , 
 $$
 it follows that $g = (\id - \alpha_0)(g'')$ and hence that $(\xi,g) = (\id - \alpha)( \xi',g'')$.
 \end{proof}

It follows from Lemma \ref{01-09-21k} that $S_0$ induces an isomorphism
$$
S : K_0(C) = \Gamma/(\id - \alpha)(\Gamma)  \to H \ 
$$
such that $S \circ q = S_0$.

\begin{lemma}\label{02-09-21} $S(K_0(C)^+) = H^+$.
\end{lemma}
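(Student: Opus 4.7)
The plan is to prove the two inclusions $S(K_0(C)^+) \subseteq H^+$ and $H^+ \subseteq S(K_0(C)^+)$ separately, using as the bridge between $\Gamma$ and $H$ the identity, valid for every $y \in \pi^{-1}(0)$,
$$
\hat L(\xi)(y) \;=\; \sum_{n \in \mathbb Z} L(\theta(h_n))(y)\, e^{n\pi(y)} \;=\; \sum_{n \in \mathbb Z} \theta(h_n)(y) \;=\; \theta(\Sigma(\xi))(y),
$$
which follows from $e^{n\pi(y)} = 1$ on $\pi^{-1}(0)$ and the fact that $L$ is a section of the restriction map $r : \mathcal A(S,\pi) \to \Aff \pi^{-1}(0)$.

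The forward inclusion is essentially immediate. Given $(\xi,g) \in \Gamma^+ \setminus \{0\}$ we have $g > 0$ on $S$, and by the definition of $\Gamma$ there is $\epsilon > 0$ with $\hat L(\xi) = g$ on $\pi^{-1}(]-\epsilon,\epsilon[)$. Restricting to $\pi^{-1}(0)$ and applying the identity above gives $\theta(\Sigma(\xi))(y) = g(y) > 0$ for every $y \in \pi^{-1}(0)$, so $S_0(\xi,g) = \Sigma(\xi) \in H^+$, and hence $S(q(\xi,g)) \in H^+$.

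For the reverse inclusion, given $h \in H^+ \setminus \{0\}$, I would take $\xi = h^{(0)}$, so that $\Sigma(\xi) = h$ and $\hat L(\xi) = L(\theta(h))$, and then construct a compatible $g \in G^+$. Since $L(\theta(h))|_{\pi^{-1}(0)} = \theta(h)$ is strictly positive on the compact set $\pi^{-1}(0)$, continuity of $L(\theta(h))$ together with properness of $\pi$ (which makes $\pi$ a closed map) produces a $k \in \mathbb N$ such that $L(\theta(h)) > 0$ on $\pi^{-1}([-1/k, 1/k])$. Set
$$
g \;:=\; L(\theta(h)) \cdot (\psi^0_k\circ\pi) \;+\; 1\cdot(\psi^+_k\circ \pi) \;+\; 1\cdot(\psi^-_k\circ\pi).
$$
This $g$ lies in $G_k \subseteq G$ because $L(\theta(h)) = \hat L(h^{(0)}) \in \hat L(\bigoplus_{\mathbb Z} H)$ and $1 \in \mathbb Q[e^{-\pi}, 1 - e^{-\pi}]$; it agrees with $\hat L(\xi)$ on $\pi^{-1}(]-1/(2k), 1/(2k)[)$, so $(h^{(0)}, g) \in \Gamma$; and the identity $\psi^-_k + \psi^0_k + \psi^+_k \equiv 1$ combined with the choice of $k$ forces $g$ to be strictly positive on all of $S$ (a convex combination of positive values on $\pi^{-1}([-1/k,1/k])$ and identically $1$ outside). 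Hence $(h^{(0)}, g) \in \Gamma^+$, and $S(q(h^{(0)}, g)) = \Sigma(h^{(0)}) = h$.

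The main obstacle is the reverse inclusion, where one has to exhibit an element $g$ that is at once in $G$, strictly positive everywhere on $S$, and compatible with $\hat L(h^{(0)})$ near the $0$-fiber. This is exactly what the construction of $G_k$ was set up to accommodate, by coupling an $\hat L(\bigoplus_{\mathbb Z} H)$-piece near $\pi^{-1}(0)$ with elements of $\mathbb Q[e^{-\pi}, 1 - e^{-\pi}]$ away from it via the partition of unity $\psi^-_k, \psi^0_k, \psi^+_k$; the fact that positivity of $\theta(h)$ on $\pi^{-1}(0)$ extends, by continuity and properness, to a neighborhood large enough to contain $\pi^{-1}([-1/k, 1/k])$ is what ensures the glued function actually lies in $G^+$ and not merely in $G$.
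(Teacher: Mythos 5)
Your reverse inclusion $H^+ \subseteq S(K_0(C)^+)$ is exactly the paper's argument: the same witness $g = L(\theta(h))\psi^0_k\circ\pi + \psi^-_k\circ\pi + \psi^+_k\circ\pi \in G_k$, with the same justification for positivity and for membership in $\Gamma$. That half is fine.

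The forward inclusion has a genuine gap. You verify only that $S(q(\Gamma^+)) \subseteq H^+$, but the cone $K_0(C)^+$ is the $C^*$-algebraic positive cone of the crossed product $C = B\rtimes_\gamma\mathbb Z$, i.e.\ the classes of projections in matrix algebras over $C$; it is not defined to be $q(\Gamma^+)$, and there is no a priori reason why an arbitrary $x \in K_0(C)^+\setminus\{0\}$ should admit a representative $(\xi,g)$ lying in $\Gamma^+$. The Pimsner--Voiculescu sequence only identifies $K_0(C)$ as a group with $\Gamma/(\id-\alpha)(\Gamma)$; it says nothing about the order, and determining $K_0(C)^+$ for a crossed product is in general a nontrivial matter. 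The paper closes this gap differently: for an arbitrary $x = q(\xi,g) \in K_0(C)^+\setminus\{0\}$ with $(\xi,g)\in\Gamma$ merely some preimage, and for each $\omega\in\pi^{-1}(0)$, it produces a $\gamma$-invariant trace $\tau_\omega$ on $B$ with $(\tau_\omega)_* = \omega_0$, extends it to the trace $\tau_\omega\circ P$ on $C$, and invokes the \emph{simplicity of $C$} to conclude that $(\tau_\omega\circ P)_*(x) > 0$; since this value equals $\theta(\Sigma(\xi))(\omega)$ and $\omega$ was arbitrary, $S(x) = \Sigma(\xi)\in H^+$. Your computation $\hat L(\xi)|_{\pi^{-1}(0)} = \theta(\Sigma(\xi))$ is correct and is implicitly used there too, but without the trace-plus-simplicity step you have only shown that the image of the subcone $q(\Gamma^+)$ lands in $H^+$, not that all of $S(K_0(C)^+)$ does.
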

\begin{proof} Let $h \in H^+ \backslash \{0\}$. There is then a $k$ so big that $L(\theta(h))(x) > 0$ for all $x \in \pi^{-1}([-\frac{1}{k},\frac{1}{k}])$. Define
$$
g :=  L(\theta(h))\psi^0_k\circ \pi + \psi^-_k \circ \pi + \psi^+_k \circ \pi \in G_k \ .
$$
Then $(h^{(0)},g) \in \Gamma^+, \ q((h^{(0)},g)) \in K_0(C)^+$, and $S( q((h^{(0)},g))) = h$. Hence,
$S(K_0(C)^+) \supseteq H^+$. Consider an element $x \in K_0(C)^+ \backslash \{0\}$ and write $x = q(\xi,g)$ for some $(\xi,g) \in \Gamma$. Let $\omega \in \pi^{-1}(0)$. Since $\omega_0 \circ \alpha = \omega_0$, there is a $\gamma$-invariant trace $\tau_\omega$ on $B$ such that ${\tau_\omega}_* = \omega_0$; see Lemma 3.5 in \cite{Th3}. Denote by $P : C \to B$ the canonical conditional expectation and note that $\tau_\omega \circ P$ is a trace on $C$. Since $x \in K_0(C)^+ \backslash \{0\}$ and $C$ is simple it follows that
$$
{(\tau_\omega \circ P)}_*(x) > 0 .
$$
Since ${(\tau_\omega \circ P)}_*(x) = \Sigma(\xi)(\omega)$, and $\omega \in \pi^{-1}(0)$ was arbitrary, it follows that $S(x) = \Sigma(\xi) \in H^+ \backslash \{0\}$. Hence, $S(K_0(C)^+) \subseteq H^+$.
\end{proof}

\begin{lemma}\label{02-09-21c} $K_1(C) = 0$.
\end{lemma}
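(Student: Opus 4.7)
The plan is to apply the Pimsner--Voiculescu six-term exact sequence to $C = B \rtimes_\gamma \mathbb Z$ and reduce the computation of $K_1(C)$ to an explicit kernel inside $\Gamma$, which a direct inspection shows is trivial.

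Since $B$ is AF, $K_1(B) = 0$, so the PV sequence \cite{PV} collapses to
\begin{equation*}
0 \longrightarrow K_1(C) \longrightarrow K_0(B) \xrightarrow{\id - \gamma_*} K_0(B).
\end{equation*}
Using the identification $K_0(B) = \Gamma$ and $\gamma_* = \alpha = \sigma \oplus \alpha_0$, this yields the isomorphism
\begin{equation*}
K_1(C) \ \simeq \ \ker\bigl(\id - \alpha : \Gamma \to \Gamma\bigr).
\end{equation*}
So it suffices to prove that $\id - \alpha$ is injective on $\Gamma$.

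Suppose $(\xi, g) \in \Gamma$ satisfies $(\id - \alpha)(\xi, g) = 0$. This splits into the two equations $(\id - \sigma)(\xi) = 0$ in $\bigoplus_{\mathbb Z} H$ and $(\id - \alpha_0)(g) = 0$ in $\mathcal A(S,\pi)$. For the first equation, $\sigma$-invariance of $\xi$ together with the fact that $\xi$ lies in the \emph{direct sum} (so only finitely many components are non-zero) forces $\xi = 0$. For the second equation, $(\id - \alpha_0)(g)(x) = g(x)\bigl(1 - e^{-\pi(x)}\bigr) = 0$ for every $x \in S$, so $g$ vanishes on the open set $\pi^{-1}(\mathbb R \backslash \{0\})$.

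The only thing left to verify is that $g$ also vanishes on $\pi^{-1}(0)$. But this is exactly where the definition of $\Gamma$ is used: since $(\xi, g) \in \Gamma$ with $\xi = 0$, there is some $\epsilon > 0$ such that $g(x) = \hat{L}(0)(x) = 0$ for all $x \in \pi^{-1}(]-\epsilon, \epsilon[)$. In particular $g$ vanishes on $\pi^{-1}(0)$, and combined with the previous step this gives $g = 0$ everywhere. Hence $(\xi, g) = 0$, so $\ker(\id - \alpha) = 0$ and $K_1(C) = 0$. The only non-routine point is recalling the defining property of $\Gamma$ to handle the potentially awkward locus $\pi^{-1}(0)$; everything else is a direct consequence of the PV sequence and the splitting of $\alpha$.
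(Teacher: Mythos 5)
Your proof is correct and follows essentially the same route as the paper's: reduce via the Pimsner--Voiculescu sequence (using $K_1(B)=0$) to the injectivity of $\id-\alpha$ on $\Gamma$, deduce $\xi=0$ from $\sigma$-invariance in the direct sum, kill $g$ off $\pi^{-1}(0)$ from $(1-e^{-\pi})g=0$, and kill it near $\pi^{-1}(0)$ from the defining condition of $\Gamma$. Your write-up just makes the last two steps slightly more explicit than the paper does.
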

\begin{proof} To establish this from the Pimsner-Voiculescu exact sequence, \cite{PV}, we must show that $\id -\alpha$ is injective. Let $(\xi,g) \in \Gamma$ and assume that $\alpha(\xi,g) = (\xi,g)$. Then $\sigma(\xi) = \xi$, implying that $\xi = 0$ and hence that $g|_{\pi^{-1}(0)} =0$. Since $(1-e^{-\pi(x)})g(x) = 0$ for all $x\in S$, it follows that $g =0$.
\end{proof}

Let $e \in B$ be a projection such that $[e] = v$ in $K_0(B) = \Gamma$. Since $eCe$ is stably isomorphic to $C$ by \cite{B} it follows that $(K_0(eCe),K_0(eCe)^+) = (K_0(C), K_0(C)^+)$.

\subsection{Completing the proof of Theorem \ref{26-08-21} via classification theory}

Let $(S,\pi)$ and $A$ be as in Theorem \ref{26-08-21}. With $H = K_0(A)$ and the assumed identification of the tracial state space $T(A)$ of $A$ with $\pi^{-1}(0)$ we get the homomorphism $\theta : H \to \Aff \pi^{-1}(0)$ from the canonical map $K_0(A) \to \Aff T(A)$. It follows from Theorem 4.11 in \cite{GH} that $\theta(K_0(A))$ is dense in $\Aff \pi^{-1}(0)$ and that
$$
K_0(A)^+ = \left\{ h \in K_0(A): \ \theta(h)(x) > 0 \ \forall x \in \pi^{-1}(0)\right\} \cup \{0\} .
$$
We can therefore apply the preceding with $H = K_0(A), \ H^+ = K_0(A)^+$ and $u = [1]$.

Let $\tau$ be a trace state on $eCe$. Then $\tau_* \circ S^{-1} : H \to \mathbb R$ is a positive homomorphism such that 
$\tau_* \circ S^{-1}(u) = \tau_*(q(v)) = \tau(e) = 1$, and there is therefore a unique trace state 
$\tau'$ on $A$ such that 
$$
{\tau'}_* =  \tau_* \circ S^{-1}
$$ 
on $K_0(A) = H$.

\begin{lemma}\label{02-09-21b} The map $\tau \to \tau'$ is an affine homeomorphism from $T(eCe)$ onto $T(A)$.
\end{lemma}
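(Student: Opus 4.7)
The plan is to recognize $\tau \mapsto \tau'$ as the composition of a chain of natural affine bijections
$$
T(eCe) \;\longleftrightarrow\; \{\sigma \in T(C) : \sigma(e) = 1\} \;\longleftrightarrow\; \{\gamma\text{-invariant } \mu \in T(B) : \mu(e) = 1\} \;\longleftrightarrow\; T(A),
$$
where the first bijection is Brown's theorem (\cite{B}) applied to the full projection $e$ in the simple algebra $C$, the second is exactly the content of (A) in \ref{07-09-21c}, and the third passes through $K_0$: on the AF algebras $A$ and $B$ traces are determined by their induced positive homomorphisms on $K_0$ (the standard Effros--Handelman--Shen correspondence), and an $\alpha$-invariant positive homomorphism $\Gamma \to \mathbb R$ factors uniquely through $\Gamma \xrightarrow{q} \Gamma/(\id-\alpha)\Gamma = K_0(C) \xrightarrow{S} H = K_0(A)$. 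The normalizations match because $S \circ q(v) = \Sigma(u^{(0)}) = u = [1_A]$.

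Well-definedness and affinity of $\tau \mapsto \tau'$ are immediate: by Lemma \ref{02-09-21}, $S$ is an order isomorphism, so $\tau_* \circ S^{-1}$ is a positive homomorphism $H \to \mathbb R$, and under Brown's identification $K_0(eCe) = K_0(C)$ the class of the unit of $eCe$ is $q(v)$, whence $\tau_* \circ S^{-1}(u) = \tau_*(q(v)) = \tau(e) = 1$. A unital AF algebra enjoys a canonical affine bijection between positive normalized $K_0$-homomorphisms and trace states, so $\tau'$ exists uniquely and depends affinely on $\tau$.

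For surjectivity, given $\rho \in T(A)$, set $\phi := \rho_* \circ S \circ q : \Gamma \to \mathbb R$: a positive $\alpha$-invariant homomorphism with $\phi(v) = 1$. By AF-ness of $B$ there is a unique trace $\mu \in T(B)$ with $\mu_* = \phi$; $\mu$ is $\gamma$-invariant (since $\gamma_* = \alpha$) and $\mu(e) = 1$. Property (A) then extends $\mu$ to a unique trace $\hat\tau$ on $C$ with $\hat\tau|_B = \mu$, and $\tau := \hat\tau|_{eCe}$ is a trace state on $eCe$ with $\tau(e) = 1$. Functoriality of $K_0$ applied to $\iota : B \hookrightarrow C$, combined with the Pimsner--Voiculescu identification $\iota_* = q$ already used in the text, gives $\tau_*(q(\xi,g)) = \hat\tau_*(\iota_*(\xi,g)) = \mu_*(\xi,g) = \phi(\xi,g) = \rho_* \circ S(q(\xi,g))$, so $\tau_* = \rho_* \circ S$ on $K_0(C)$ and $\tau' = \rho$. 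Injectivity is just the reverse implication: $\tau_1' = \tau_2'$ forces $(\tau_1)_* = (\tau_2)_*$ on $K_0(C)$, the extensions to $C$ then restrict to the same $K_0$-map on $K_0(B)$, hence (by AF-ness of $B$) to the same $\gamma$-invariant trace on $B$, hence (by (A)) to the same trace on $C$, and finally to the same trace on $eCe$.

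For the topology, $T(eCe)$ and $T(A)$ are compact Hausdorff in the weak-$*$ topology, so it suffices to show that $\tau_n \to \tau$ in $T(eCe)$ implies $\tau_n' \to \tau'$ in $T(A)$. Weak-$*$ convergence of $\tau_n$ implies pointwise convergence of $(\tau_n)_*$ on $K_0(eCe)$ (by evaluating on projections in matrix algebras over $eCe$), hence of ${(\tau_n')}_* = (\tau_n)_* \circ S^{-1}$ on $K_0(A)$; since $A$ is AF its projections span a dense subspace and the $\tau_n'$ have norm $1$, so $\tau_n' \to \tau'$ in weak-$*$. A continuous bijection between compact Hausdorff spaces is a homeomorphism. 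The main technical point, rather than a serious obstacle, is the bookkeeping that identifies the $K_0$-map of a trace on $C$ whose $B$-restriction is $\gamma$-invariant with the quotient of the $K_0$-map on $B$ under $q$; this is a direct consequence of the functoriality of $K_0$ and the identification $\iota_* = q$ already noted in the text.
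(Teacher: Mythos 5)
Your proof is correct and follows essentially the same route as the paper's: affinity and continuity via the $K_0$-correspondence for AF algebras, injectivity via $\iota_* = q$ together with property (A), and surjectivity by producing a trace on $C$ whose restriction to $B$ induces the prescribed homomorphism on $\Gamma$. The only cosmetic difference is that the paper exhibits the preimage concretely as $\tau_\omega \circ P$ using the conditional expectation, whereas you invoke property (A) to extend the $\gamma$-invariant trace on $B$; both are valid.
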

\begin{proof} The map is clearly affine. To show that it is continuous, assume that $\{\tau_n\}$ is a convergent sequence in $T(eCe)$ and let $\tau = \lim_{n \to \infty} \tau_n$. Then $\lim_{n \to \infty} {\tau_n}_* \circ S^{-1}(h) = \tau_*\circ S^{-1}(h)$ for all $h \in H$. Since $A$ is AF this implies that $\lim_{n \to \infty} \tau'_n = \tau'$ in $T(A)$. To see that the map is surjective, let $\tau \in T(A)$. Then $\tau_*: H \to \mathbb R$ is given by evaluation at a point $\omega \in \pi^{-1}(0)$, and $\tau_1 = \tau_{\omega}\circ P$ is a trace state on $eCe$ such that $\tau_1' = \tau$. To see that the map is also injective, consider $\tau_1,\tau_2 \in T(eCe)$. If $\tau_1' = \tau_2'$, it follows that ${\tau_1}_* = {\tau_2}_*$. Since ${\tau_1}_*\circ \iota_* = {\tau_2}_*\circ \iota_*$ and $B$ is AF it follows that $\tau_1|_B = \tau_2|_B$. Thanks to (A) from Additional properties \ref{07-09-21c} this implies that $\tau_1 = \tau_2$.
\end{proof}

\begin{lemma}\label{02-09-21k} $eCe$ is $*$-isomorphic to $A$.
\end{lemma}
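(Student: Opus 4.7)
The plan is to show that $eCe$ and $A$ have isomorphic Elliott invariants and then invoke the classification theorem for unital simple separable nuclear $\mathcal Z$-stable $C^*$-algebras in the UCT class.

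First I would verify that the Elliott invariants of $eCe$ and $A$ match. At the level of ordered $K_0$-groups with unit, Lemma \ref{02-09-21} gives an order isomorphism $S : K_0(eCe) = K_0(C) \to H = K_0(A)$, and by construction the unit of $eCe$ corresponds, via the identification $K_0(eCe) = K_0(C)$, to $q(v)$, whose image under $S$ is $S_0(v) = \Sigma(u^{(0)}) = u = [1_A]$. Thus $S$ is an isomorphism of ordered groups with order unit. Lemma \ref{02-09-21c} gives $K_1(eCe) = K_1(C) = 0 = K_1(A)$. Lemma \ref{02-09-21b} provides the affine homeomorphism $\tau \mapsto \tau'$ from $T(eCe)$ onto $T(A)$, and the compatibility ${\tau'}_* = \tau_*\circ S^{-1}$ on $H$ was built into the very definition of $\tau'$; hence the pairings between tracial state space and $K_0$ are intertwined by $S$ and $\tau \mapsto \tau'$. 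The Elliott invariants therefore agree.

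Next I would verify the hypotheses of classification. The algebra $eCe$ is unital (with unit $e$), separable, and simple (since $C$ is simple by the argument recorded after Additional Properties \ref{07-09-21c}, and a corner of a simple algebra is simple). It is nuclear because $B$ is AF, hence nuclear, and $\mathbb Z$ is amenable so $C = B\rtimes_\gamma \mathbb Z$ is nuclear, and nuclearity passes to corners. It is $\mathcal Z$-stable: property (B) of Additional Properties \ref{07-09-21c} gives that $C$ is $\mathcal Z$-stable, and $\mathcal Z$-stability is preserved under passage to full hereditary subalgebras (here $eCe$ with $e$ full in $C$, since $C$ is simple and $e \neq 0$). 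The UCT holds for $B$ (as it is AF), hence for the crossed product $C$ by the Pimsner--Voiculescu sequence and closure of the UCT class under extensions, and therefore for $eCe$ (which is stably isomorphic to $C$). The target algebra $A$ is a unital infinite-dimensional simple AF algebra, so trivially satisfies all these properties.

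With both algebras known to be unital, separable, simple, nuclear, $\mathcal Z$-stable, UCT $C^*$-algebras with isomorphic Elliott invariants, I would conclude by applying the Elliott classification theorem (in the form due to Gong--Lin--Niu and Tikuisis--White--Winter) to obtain a $*$-isomorphism $eCe \to A$ inducing the given isomorphism of Elliott invariants. The main point that required any real work has been done already in Sections 4.2--4.4; the only obstacle here is a careful bookkeeping check that the isomorphism $S$ and the homeomorphism $\tau \mapsto \tau'$ are compatible with the pairings, which is immediate from the definitions, together with an appeal to the classification theorem and to the fact that $\mathcal Z$-stability and UCT pass from $C$ to the corner $eCe$.
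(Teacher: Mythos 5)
Your proposal is correct and follows essentially the same route as the paper: match the Elliott invariants using Lemmas \ref{02-09-21}, \ref{02-09-21c} and \ref{02-09-21b}, verify that both algebras are unital, separable, simple, nuclear, $\mathcal Z$-stable and in the UCT class (with $\mathcal Z$-stability of $eCe$ inherited from $C$ via property (B), and of $A$ via approximate divisibility of infinite-dimensional unital simple AF algebras), and then invoke the classification theorem. The paper's proof is the same argument with explicit references (Corollary 3.2 of \cite{TW} for passing $\mathcal Z$-stability to the corner, and Corollary D of \cite{CETWW} for the classification step).
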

\begin{proof}  Since $A$ is AF the $K_1$ group of $A$ is trivial, and by Lemma \ref{02-09-21c} the same is true for $eCe$ since $eCe$ is stably isomorphic to $C$. The affine homeomorphism $\tau \to \tau'$ of Lemma  \ref{02-09-21b} is compatible with the isomorphism of ordered groups $S : K_0(eCe) \to K_0(A)$ from Lemma \ref{02-09-21} in the sense that ${\tau'}_* \circ S = \tau_*$, resulting in an isomorphism from the Elliott invariant of $eCe$ onto that of $A$. Both algebras, $A$ and $eCe$, are separable, simple, unital, nuclear and in the UCT class. It is well known that all infinite-dimensional unital simple AF algebras are approximately divisible and hence $\mathcal Z$-absorbing by Theorem 2.3 of \cite{TW}; in particular, $A$ is $\mathcal Z$-absorbing. Since $C$ is $\mathcal Z$-absorbing thanks to (B) in Additional properties \ref{07-09-21c}, it follows from Corollary 3.2 of \cite{TW} that $eCe$ is $\mathcal Z$-absorbing. Therefore $eCe$ is isomorphic to $A$ by Corollary D of \cite{CETWW}, which in turn is based on \cite{GLN1}, \cite{GLN2}, \cite{EGLN} and \cite{TWW}. (In the case where $A$ is a UHF algebra there is an alternative route through the literature to the same effect. See Remark 4.12 in \cite{Th3}.)
\end{proof}

We consider the dual action on $C = B  \rtimes_\gamma \mathbb Z$ as a $2\pi$-periodic flow and we denote by $\theta$ the restriction of this flow to $eCe$.

\begin{lemma}\label{02-09-21d} The KMS bundle $(S^\theta,\pi^\theta)$ of $\theta$ is isomorphic to $(S,\pi)$.
\end{lemma}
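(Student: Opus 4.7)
The plan is to construct a bijection $\Phi : S \to S^\theta$ by chaining the two fiber-wise correspondences provided by Corollary \ref{06-08-21a} and Lemma \ref{27-08-21x}, and then to show it is a homeomorphism whose restriction to each fiber is affine. Explicitly, for $(\omega,\beta) \in S$ the homomorphism $\omega_\beta : \Gamma \to \mathbb R$ is positive and satisfies $\omega_\beta(v) = 1$ and $\omega_\beta \circ \alpha = e^{-\beta}\omega_\beta$. Since $(K_0(B), K_0(B)^+) = (\Gamma, \Gamma^+)$, $\gamma_* = \alpha$, $[e] = v$, and $e$ is a full (hence nonzero) projection in the simple algebra $C$, Corollary \ref{06-08-21a} applied with $D = B$, $\rho = \gamma$, $q = e$ associates to $\omega_\beta$ a unique $\beta$-KMS state $\tau_\omega \circ P|_{eCe}$ for $\theta$, where $\tau_\omega$ is the trace on $B$ with $(\tau_\omega)_* = \omega_\beta$. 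I will set $\Phi(\omega,\beta) = (\tau_\omega \circ P|_{eCe},\beta)$; Lemma \ref{27-08-21x} ensures that every positive homomorphism satisfying the constraints has the form $\omega_\beta$ for a unique $\omega \in \pi^{-1}(\beta)$, so $\Phi$ is a fiber-preserving bijection onto $S^\theta$ that is affine on each fiber (both cited correspondences being affine).

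To verify continuity of $\Phi$ I would check continuity of its evaluation on each self-adjoint $a \in eCe$. Since $\Phi(\omega,\beta)(a) = \tau_\omega(P(a))$ and $P(a) \in eBe$, it suffices to show $(\omega,\beta) \mapsto \tau_\omega(y)$ is continuous on $S$ for every self-adjoint $y \in eBe$. For a projection $p \leq e$ in $B$, writing $[p] = (\xi_p,g_p) \in \Gamma$ with $g_p \in \mathcal A(S,\pi) \subseteq C(S)$, one has $\tau_\omega(p) = \omega_\beta([p]) = g_p(\omega)$, which is manifestly continuous on $S$. Using that $eBe$ is AF, a general $y \in eBe$ is a norm limit of real linear combinations of such projections, and the uniform bound $|\tau_\omega(y)| \leq \|y\|$ valid for all $(\omega,\beta) \in S$ (since $\tau_\omega(e) = \omega_\beta(v) = 1$ makes $\tau_\omega|_{eBe}$ a state) ensures this approximation converges uniformly on $S$; continuity therefore passes to the limit.

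Finally, to upgrade $\Phi$ from a continuous bijection to a homeomorphism I would use properness. Since $\pi^\theta \circ \Phi = \pi$ and both $\pi$ and $\pi^\theta$ are proper, any compact $K \subseteq S^\theta$ has $\Phi^{-1}(K)$ closed in $S$ and contained in the compact set $\pi^{-1}(\pi^\theta(K))$, hence compact. So $\Phi$ is a continuous proper bijection between locally compact Hausdorff spaces, hence closed, and therefore a homeomorphism. Combined with fiber-wise affineness this yields the required isomorphism of proper simplex bundles. (Fibers over $\beta \notin \pi(S)$ are treated uniformly, since by Lemma \ref{27-08-21x} such $\beta$ support no admissible $\phi$, so $S^\theta_\beta = \emptyset$ as well.)

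The main obstacle is the continuity step. The fiber-wise bijectivity and affine structure follow almost formally from the two cited results, but continuity of $\Phi$ as a map of \emph{bundles} requires that the approximation of arbitrary elements of $eBe$ by projections be uniform across $S$, not merely within individual fibers. The uniform state bound on the family $\{\tau_\omega|_{eBe}\}_{(\omega,\beta) \in S}$, itself a consequence of the normalization $\omega_\beta(v) = 1$, is exactly what delivers this uniformity and thereby permits the transition from continuity of the test functions $g_p$ to continuity of $\Phi$ on all of $eCe$.
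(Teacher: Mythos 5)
Your proof is correct and follows essentially the same route as the paper's: a fiber-wise bijection assembled from Corollary \ref{06-08-21a} and Lemma \ref{27-08-21x}, continuity reduced via the conditional expectation and the uniform normalization $\tau_\omega(e)=1$ to evaluation of the continuous functions $g_p$ at projections $p\in eBe$, and properness of $\pi$ and $\pi^\theta$ to upgrade the continuous bijection to a homeomorphism. The only difference is that you build the map from $S$ to $S^\theta$ while the paper defines $\Phi:S^\theta\to S$ and then proves continuity of $\Phi^{-1}$, which is the identical argument.
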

\begin{proof} Let $(\omega,\beta) \in S^\theta$. By Corollary \ref{06-08-21a}, $(\hat{\omega}|_B)_*$ is a positive homomorphism $\Gamma \to \mathbb R$ such that $(\hat{\omega}|_B)_*(v) =1$ and $(\hat{\omega}|_B)_* \circ \alpha = e^{-\beta}(\hat{\omega}|_B)_*$. By Lemma \ref{27-08-21x}, there is $\mu \in \pi^{-1}(\beta)$ such that $(\hat{\omega}|_B)_*(\xi,g) = g(\mu)$ for all $(\xi,g) \in \Gamma$. $\mu$ is unique since $G$ separates the points of $S$ by Lemma \ref{01-09-21e}. We define $\Phi : S^\theta \to S$ by $\Phi(\omega,\beta) = \mu$. By combining Lemma \ref{27-08-21x} and Corollary \ref{06-08-21a} we conclude that $\Phi$ restricts to an affine bijection from ${\pi^\theta}^{-1}(\beta)$ onto $\pi^{-1}(\beta)$ for every $\beta \in \mathbb R$. It follows in particular that $\Phi$ is surjective. If $(\omega_i,\beta_i) \in S^\theta, \ i =1,2$, are such that $\Phi((\omega_1,\beta_1)) = \Phi((\omega_2,\beta_2))$, it follows that $\beta_1 = \pi\left(\Phi((\omega_1,\beta_1))\right) = \pi\left(\Phi((\omega_2,\beta_2))\right) = \beta_2$ and hence that $(\omega_1,\beta_1) = (\omega_2,\beta_2)$. Thus, $\Phi$ is a bijection. Since $\pi \circ \Phi = \pi^\theta$, and $\pi$ and $\pi^\theta$ are both proper maps, it suffices to show that $\Phi^{-1}$ is continuous. Let therefore $\{\omega^n\}$ be a sequence in $S$ such that $\lim_{n \to \infty} \omega^n = \omega$ in $S$. Set $\beta_n = \pi(\omega^n)$ and note that $\lim_{n \to \infty} \beta_n = \beta$, where $\beta = \pi(\omega)$. It follows that $\lim_{n \to \infty} \omega^n_{\beta_n}(x) = \omega_\beta(x)$ for all $x \in \Gamma$. Let $\tau^n$ and $\tau$ be the traces on $B$ determined by the conditions that ${\tau^n}_* = \omega^n_{\beta_n}$ and $\tau_* = \omega_\beta$. Then $\Phi^{-1}(\omega^n) = (\tau^n\circ P|_{eCe},\beta_n)$ and $\Phi^{-1}(\omega) = (\tau \circ P|_{eCe},\beta)$. It suffices therefore to show that $\lim_{n \to \infty} \tau^n\circ P(exe) = \tau \circ P(exe)$ for all $x \in C$. Since $\tau^n\circ P(e) = \tau \circ P(e) =1$, it suffices to check for $x$ in a dense subset of $C$. If $w$ is the canonical unitary in the multiplier algebra of $C$ coming from the construction of $C$ as a crossed product, it suffices to show that $\lim_{n \to \infty} \tau^n\circ P(ebw^ke) = \tau \circ P(ebw^ke)$ for all $k \in \mathbb Z$ and all $b \in B$. Since $P(ebw^ke) = 0$ when $k \neq 0$ it suffices to consider the case $k = 0$; that is, it suffices to show that
$\lim_{n \to \infty} \tau^n (ebe) = \tau(ebe)$.
By approximating $ebe$ by a linear combination of projections from $eBe$ it suffices to show that $\lim_{n \to \infty} \tau^n(p) = \tau(p)$
when $p$ is a projection in $eBe$. This holds because 
$$
\lim_{n \to \infty}\tau^n(p)  = \lim_{n \to \infty} \omega^n_{\beta_n}([p]) = \omega_\beta([p]) = \tau(p).
$$

\end{proof}

The proof of Theorem \ref{26-08-21} is complete.


\begin{thebibliography}{WWWWW} 






\bibitem[BEH]{BEH} O. Bratteli, G. A. Elliott and R.H. Herman, {\em On the possible temperatures of a dynamical system}, Comm. Math. Phys. {\bf 74} (1980), 281--295.

\bibitem[BEK1]{BEK1} O. Bratteli, G. A. Elliott and A. Kishimoto, {\em The temperature state space of a dynamical system I}, J. Yokohama Univ. {\bf 28} (1980), 125--167. 

\bibitem[BEK2]{BEK2} O. Bratteli, G. A. Elliott and A. Kishimoto, {\em The temperature state space of a dynamical system II }, Ann. of Math. {\bf 123} (1986), 205--263.

\bibitem[BR]{BR} O. Bratteli and D.W. Robinson, {\em Operator Algebras and Quantum Statistical Mechanics I + II}, Texts and Monographs in Physics, Springer Verlag, New York, Heidelberg, Berlin, 1979 and 1981.

  
\bibitem[B]{B} L. G. Brown, {\em Stable isomorphism of hereditary subalgebras of $C^*$-algebras}, Pacific J. Math. {\bf 71} (1977), 335--348.  
 
 

  
    
     
\bibitem[CETWW]{CETWW} J. Castillejos, S. Evington, A. Tikuisis, 
S. White and W. Winter, {\em Nuclear dimension of simple $C^*$-algebras}, arXiv:1901.05853v3, Invent. Math., to appear. 












\bibitem[EHS]{EHS} E.G. Effros, D.E. Handelman and C.-L. Shen, {\em Dimension groups and their affine representations}, Amer. J. Math. {\bf 102} (1980), 385--407.


\bibitem[E1]{E1} G. A. Elliott, {\em On the classification of inductive limits of sequences of semisimple finite-dimensional algebras}, J. Algebra {\bf 38} (1976), 29--44.

\bibitem[E2]{E2} G. A. Elliott, {\em Some simple $C^*$-algebras constructed as crossed products with discrete outer
automorphism groups}, Publ. RIMS, Kyoto Univ. {\bf 16} (1980), 299--311.




\bibitem[EGLN]{EGLN} G. A. Elliott, G. Gong, H. Lin and Z. Niu, {\em On the classification of simple amenable $C^*$-algebras with finite decomposition rank, II,} preprint. arXiv:1507.03437.


\bibitem[EST]{EST}  G. A. Elliott, Y. Sato and K. Thomsen, {\em In preparation}.





\bibitem[GLN1]{GLN1} G. Gong, H. Lin and Z. Niu, {\em A classification of finite simple amenable Z-stable $C^*$-algebras, I: $C^*$-algebras with generalized tracial rank one}, C.R. Math. Acad. Sci. Soc. R. Can. {\bf 42} (2020), 63--450.



\bibitem[GLN2]{GLN2} G. Gong, H. Lin and Z. Niu, {\em A classification of finite simple amenable Z-stable $C^*$-algebras, II: $C^*$-algebras with rationalized generalized tracial rank one}, C.R. Math. Acad. Sci. Soc. R. Can. {\bf 42} (2020), 451--539.



 
 

\bibitem[GH]{GH} K.R. Goodearl and D.E. Handelman, {\em Metric Completions of Partially Ordered
Abelian Groups}, Indiana Univ. Math. J. {\bf 29} (1980), 861--895.
 



\bibitem[JS]{JS} X. Jiang and H. Su, {\em On a simple unital projectionless $C^*$-algebra}, Amer. J. Math. {\bf 121} (2000), 359--413.

\bibitem[Ki1]{Ki1} A. Kishimoto, {\em Outer Automorphisms and Reduced Crossed Products
of Simple $C^*$-Algebras}, Comm. Math. Phys. {\bf 81} (1981), 429--435.






\bibitem[LN]{LN} M. Laca and S. Neshveyev, {\em KMS states of quasi-free dynamics on Pimsner algebras}, J. Funct. Anal. {\bf 211} (2004), 457--482.

  
  
  
 


 
 
    
    
    
    


  

\bibitem[MS1]{MS1} H. Matui and Y. Sato, {\em Decomposition rank of UHF-absorbing $C^*$-algebras}, Duke Math. J. {\bf 163} (2014), 2687--2708.   
  
\bibitem[MS2]{MS2} H. Matui and Y. Sato, {\em Z-stability of crossed products by strongly outer actions}, Comm.Math. Phys. {\bf 314}(2012), 193--228.
 

\bibitem[Ni]{Ni} V. Nistor, {\em On the homotopy groups of the automorphism group of AF-C*-algebras}, J. Operator Theory {\bf 19} (1988), 319--340. 
 
  


\bibitem[PV]{PV} M. Pimsner and D. Voiculescu, {\em Exact sequences for $K$-groups and Ext-groups of certain cross-products of $C^*$-algebras}, J. Oper. Th. {\bf 4} (1980), 93--118.
  
 
 






 







\bibitem[Sa]{Sa} Y. Sato, {\em The Rohlin property for automorphisms of the Jiang--Su algebra}, J. Funct. Analysis {\bf 259} (2010) 453--476.



 
\bibitem[Th1]{Th1} K. Thomsen, {\em KMS weights on graph $C^*$-algebras}, Adv. Math. {\bf 309} (2017) 334--391.



\bibitem[Th2]{Th2} K. Thomsen, {\em Phase transition in the CAR algebra}, Adv. Math. {\bf 372} (2020); arXiv:1810.01828 


\bibitem[Th3]{Th3} K. Thomsen, {\em The possible temperatures for flows on a simple AF algebra}, Comm. Math. Phys. {\bf 386} (2021), 1489--1518.




\bibitem[TWW]{TWW} A. Tikuisis, S. White and W. Winter, {\em Quasidiagonality of nuclear $C^*$-algebras}, Ann. of Math. {\bf 185} (2017), 229--284.





 


\bibitem[TW]{TW} A. S. Toms and W. Winter, {\em Strongly self-absorbing $C^*$-algebras}, Trans. Amer. Math. Soc. {\bf 358} (2007), 3999--4029.




























\end{thebibliography}
\end{document}